\setlist{leftmargin=4.2mm}
\newtheorem{theorem}{Theorem}
\newtheorem{lemma}{Lemma}
\newtheorem{proposition}{Proposition}
\newtheorem{definition}{Definition}
\newtheorem{assumption}{Assumption}
\newcommand{\subparagraph}{} 
\newcommand{\argmin}{\operatornamewithlimits{arg\,min}}
\newcommand{\learningratehum}{\eta^\text{h}}
\newcommand{\learningrateaut}{\eta^\text{a}}
\newcommand{\demandhumRV}{\lambda^\text{h}}
\newcommand{\demandautRV}{\lambda^\text{a}}
\newcommand{\demandhum}{\bar{\lambda}^\text{h}}
\newcommand{\demandaut}{\bar{\lambda}^\text{a}}
\newcommand{\pathidx}{p}
\newcommand{\pathset}{\mathcal{P}}
\newcommand{\cellidx}{i}
\newcommand{\cellset}{\mathcal{I}}
\newcommand{\timeidx}{k}
\newcommand{\ffl}{a}
\newcommand{\numnbcells}{m^\text{n}_\pathidx}
\newcommand{\numbcells}{m^\text{b}_\pathidx}
\newcommand{\nbcellsset}{{[}\numnbcells{]}}
\newcommand{\numlanesnb}{\numlanes^\text{n}}
\newcommand{\numlanesb}{\numlanes^\text{b}}
\newcommand{\laneratio}{r}
\newcommand{\ffvelocity}{\bar{v}}
\newcommand{\numlanes}{b}
\newcommand{\spacehuman}{h^\text{h}}
\newcommand{\spaceaut}{h^\text{a}}
\newcommand{\send}{S}
\newcommand{\recv}{R}
\newcommand{\priority}{q}
\newcommand{\fracouthum}{\beta^\text{h}}
\newcommand{\fracoutaut}{\beta^\text{a}}
\newcommand{\upstreamcells}{\mathcal{U}}
\newcommand{\dens}{n}
\newcommand{\denshum}{n^\text{h}}
\newcommand{\densaut}{n^\text{a}}
\newcommand{\denscong}{n^\text{c}}
\newcommand{\flow}{f}
\newcommand{\flowhum}{f^\text{h}}
\newcommand{\flowaut}{f^\text{a}}
\newcommand{\flowin}{y}
\newcommand{\flowinhum}{y^\text{h}}
\newcommand{\flowinaut}{y^\text{a}}
\newcommand{\routinghum}{\mu^\text{h}}
\newcommand{\routingaut}{\mu^\text{a}}
\newcommand{\jamden}{\bar{n}}
\newcommand{\jamdennb}{\bar{n}^\text{n}}
\newcommand{\jamdenb}{\bar{n}^\text{b}}
\newcommand{\critdens}{\tilde{n}}
\newcommand{\critdensnb}{\tilde{n}^\text{n}}
\newcommand{\critdensb}{\tilde{n}^\text{b}}
\newcommand{\capacity}{\bar{F}}
\newcommand{\capacitynb}{\bar{F}^\text{n}}	
\newcommand{\capacityb}{\bar{F}^\text{b}}	
\newcommand{\shockspd}{w}
\newcommand{\shockspdnb}{w^\text{n}}
\newcommand{\shockspdb}{w^\text{b}}
\newcommand{\autlev}{\alpha}
\newcommand{\latency}{\ell}
\newcommand{\latencycong}{\ell^{\text{c}}}
\title{\LARGE \bf
	Learning How to Dynamically Route \\ Autonomous Vehicles on Shared Roads
}
\author{Daniel A.~Lazar$^*$, Erdem B\i y\i k$^*$, Dorsa~Sadigh, Ramtin~Pedarsani\vspace{-30px}
	\thanks{$^{*}$Authors contributed equally.}
	\thanks{Daniel Lazar is with the Department of Electrical and Computer Engineering, 
		UC Santa Barbara
		{\tt\small dlazar@ece.ucsb.edu}}
	\thanks{Erdem B\i y\i k is with the Department of Electrical Engineering,
		Stanford University
		{\tt\small ebiyik@stanford.edu}}
	\thanks{Dorsa~Sadigh is with the Departments of Computer Science and Electrical Engineering,
		Stanford University
		{\tt\small dorsa@cs.stanford.edu}}
	\thanks{Ramtin~Pedarsani is with the Department of Electrical and Computer Engineering, 
		UC Santa Barbara
		{\tt\small ramtin@ece.ucsb.edu}}
	}
\begin{document}
\abovedisplayskip=5pt
\abovedisplayshortskip=5pt
\belowdisplayskip=5pt
\belowdisplayshortskip=5pt
	
\maketitle
\thispagestyle{empty}
\pagestyle{empty}

\begin{abstract}
	
  Road congestion induces significant costs across the world, and road network disturbances, such as traffic accidents, can cause highly congested traffic patterns. If a planner had control over the routing of all vehicles in the network, they could easily reverse this effect. In a more realistic scenario, we consider a planner that controls autonomous cars, which are a fraction of all present cars. We study a dynamic routing game, in which the route choices of autonomous cars can be controlled and the human drivers react selfishly and dynamically. As the problem is prohibitively large, we use deep reinforcement learning to learn a policy for controlling the autonomous vehicles. This policy indirectly influences human drivers to route themselves in such a way that minimizes congestion on the network. To gauge the effectiveness of our learned policies, we establish theoretical results characterizing equilibria and empirically compare the learned policy results with best possible equilibria. We prove properties of equilibria on parallel roads and provide a polynomial-time optimization for computing the most efficient equilibrium. Moreover, we show that in the absence of these policies, high demand and network perturbations would result in large congestion, whereas using the policy greatly decreases the travel times by minimizing the congestion. To the best of our knowledge, this is the first work that employs deep reinforcement learning to reduce congestion by indirectly influencing humans' routing decisions in mixed-autonomy traffic.
	
\end{abstract}

\renewcommand\IEEEkeywordsname{Keywords}
\begin{IEEEkeywords}
	Dynamic routing, reinforcement learning, mixed-autonomy traffic
\end{IEEEkeywords}

\vspace{-15px}
\section{Introduction}
\label{sec:introduction}
\IEEEPARstart{C}{ongestion} can result in substantial economic and social costs~\cite{schrank2015} which have only been growing in recent years, especially with the advent of ride-hailing services~\cite{henao2017impacts,beojone2021inefficiency}. Congestion is formed by a number of mechanisms, such as when many vehicles try to enter a road at the same time. A higher-level cause is from how people choose their routes -- when people selfishly choose the quickest routes available to them, this often results in greater congestion and longer travel time than if people had their routes chosen for them optimally in terms of the overall experienced delay~\cite{roughgarden2002bad}. There are some existing methods for fighting congestion, such as congestion pricing~\cite{hu2019newYork}, variable speed limits~\cite{lu2011novel} and highway ramp metering~\cite{gomes2008behavior}. However, they can be difficult to administer, and can require significant changes to infrastructure.

The introduction of autonomous vehicles to public roads provides an opportunity for better congestion management \cite{di2020survey}.
Our key idea is that by controlling the routing of autonomous vehicles, we can change the delay associated with traversing each road, thereby indirectly influencing peoples' routing choices. By influencing people to use more ``socially advantageous'' routes, we can eliminate long queues and significantly reduce traffic jams on roads.

The model for \emph{mixed-autonomy} traffic, meaning traffic with both human-driven and autonomous vehicles, is complex, involving very large and continuous state space and continuous action space. Having human drivers dynamically respond to the choices of the autonomous vehicles further complicates the matter, making a dynamic programming-based approach and other classical methods infeasible. Because of this, we use model-free deep reinforcement learning (RL) to learn a policy without requiring access to the dynamics of the transportation network. Specifically, we show it is possible to learn a policy via proximal policy optimization (PPO) \cite{schulman2017proximal} that mitigates traffic congestion by managing routing of autonomous cars given the network state. 

To understand the performance of the learned policy, we investigate the \emph{equilibrium} behavior of the network. Previous works \cite{krichene2017stackelberg, lazar2018altruism} have shown that there is a wide spectrum of equilibria in traffic networks, meaning situations in which everyone is taking the quickest route immediately available to them, and these equilibria can have greatly varying average user delay. We establish efficient ways to compute equilibria in the network and compare the best equilibrium (in terms of latency) with the RL policy, which works regardless of whether equilibrium conditions hold or not. We show that the learned policy reaches the `desirable' equilibria that have low travel times when starting with varying traffic patterns, and can recover network functionality after a disturbance such as a traffic accident. To summarize, our contributions are as follows:

\begin{figure*}[t]
	\centering
	\includegraphics[width=\linewidth]{./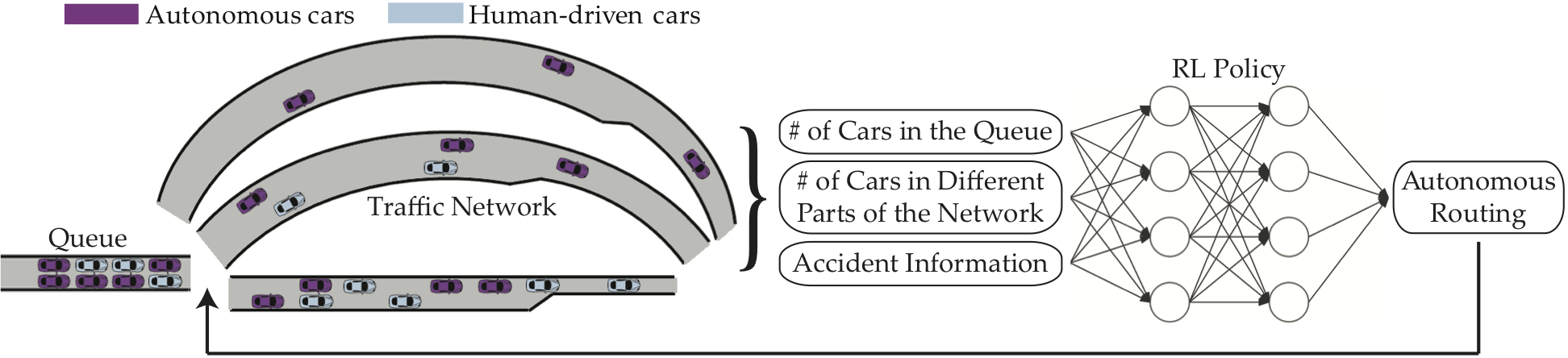}
	\vspace{-15px}
	\caption{The schematic diagram of our framework. Our deep RL agent processes the state of the traffic and outputs a control policy for autonomous cars' routing.}
	\label{fig:schematic}
	\vspace{-20px}
\end{figure*}

\begin{itemize}[nosep]
	\item \emph{Theoretical analysis:} We characterize equilibria in the network and derive a polynomial-time computation for finding optimal equilibria of parallel networks. 
	\item \emph{Finding a control policy via deep RL:} We employ deep RL methods to learn a routing policy for autonomous cars that effectively saves the traffic network from unboundedly large delays. We show via simulation that the RL policy is able to bring our network to the best possible equilibrium when starting from a congested state or after a network disturbance on parallel networks. We further show that an MPC-based approach and a greedy optimization method fail to do so, and thus is outperformed by the RL-based method in general networks.
\end{itemize}

We visualize our framework in the schematic diagram Fig.~\ref{fig:schematic}.

\textbf{Literature review.} Many works seek to understand how much traffic network latency could be improved if vehicle routing was controlled by a central planner, including works on \emph{congestion games} \cite{dafermos1972multiclass_user, hearn1984convex, roughgarden2002bad, lazar2020routing, mehr2018can}. Some study how indirectly influencing peoples' routing choices by providing them network state information affects network performance \cite{lazarus2018decision, wu2018value}. \emph{Stackelberg Routing}, in which only some of the vehicles are controlled, is another way to influence routing \cite{roughgarden2004stackelberg, swamy2012effectiveness}; some works incorporate the \emph{dynamics} of human routing choices \cite{krichene2018social}. While providing useful techniques for analysis, the congestion game framework does not reflect a fundamental empirical understanding about vehicle flow on roads, namely that roads with low vehicle density have a roughly constant latency, and roads with high density see latency \emph{increase} as flow \emph{decreases}.

Works on CTM \cite{daganzo1994cell, muralidharan2009freeway} capture this phenomenon, including works that characterize equilibria on roads described with CTM \cite{gomes2008behavior}. Notably, some consider equilibria of parallel-path Stackelberg Games, including with mixed autonomy \cite{krichene2017stackelberg,lazar2018altruism}. However, their analyses are limited to steady-state and do not capture the dynamics. \cite{aswani2011game} considers a Fundamental Diagram of Traffic-based model for slowly varying traffic. They formulate this as a Stackelberg Game and design routing information for users to minimize overall latency and bound the resulting inefficiency in a simple network. However, they only consider a single-vehicle type, not a mixed-autonomy setting.

Some works look at the low-level control of autonomous cars, specifically controlling acceleration to smooth flow and ease congestion at bottlenecks \cite{cui2017stabilizing, wu2017emergent, wu2018stabilizing}; \cite{vinitsky2018benchmarks} provides a benchmark for gauging the performance of these techniques. Other works learn ramp metering policies \cite{belletti2018expert}, localize congestion \cite{sivaranjani2015localization}, and model lane-change behavior with a neural network \cite{wright2019neural}.

In addition to these learning methods, there has also been an effort to use RL for route selection \cite{mao2018reinforcement} and driver choice modeling in traffic assignment problem \cite{bazzan2016multiagent,zhou2020reinforcement,ramos2018analysing,stefanello2016using}. Again using RL, \cite{grunitzki2014individual} shows reward shaping mechanisms could be utilized to reach better equilibria. Recently, \cite{shou2020reward,shou2020multi} develop a hierarchical approach to optimize fares, tolling and signal control in the high-level whereas a multi-agent RL method models the drivers in the lower level. Although these works show the effectiveness and potential of RL methods in transportation, to the best of our knowledge, these methods have not been used in a routing game with mixed-autonomy traffic where a central planner aims to reduce congestion by indirectly influencing humans' routing via the routing of autonomous vehicles.

Without any reinforcement learning component, some works provide macroscopic models of roads shared between human-driven and autonomous cars. \cite{li2018modeling} models highway bottlenecks in the presence of platoons of autonomous vehicles mixed in with human-driven vehicles. The authors relate their model to a CTM type-model similar to the model presented below, though it is specific to a single highway. \cite{mahmassani201650th} describes a microscopic model to determine the effect of autonomy on throughput, yielding fundamental diagrams. The fundamental relationship between autonomy level and critical density in our model mirrors that of \cite{levin2016multiclass}, which develops a CTM model for mixed autonomy traffic.

Some works solve the dynamic traffic assignment problem for networks with a CTM-based flow model, including some which decompose the optimization to enable optimizing flow on large networks \cite{mehrabipour2019decomposition}. In contrast, our works studies the setting in which some flow demand is controlled to optimize the system performance, and some flow demand updates according to a selfish update rule. This precludes the use of such decomposition techniques, since the optimization can no longer be formulated as a linear program. Because of this, we use RL to solve for a routing policy in our setting.

\vspace{-5px}
\section{Vehicle flow dynamics: modeling roads}
\label{sec:flow_dynamics}
In this section we describe dynamics governing how vehicle flow travels on a road. We extend the CTM, a widely used model that discretizes roads into \emph{cells}, each with uniform density \cite{daganzo1994cell, muralidharan2009freeway}, for mixed-autonomy traffic. In CTM, each road segment has a maximum flow that can traverse it. The key idea of our extension is that since autonomous vehicles can keep a shorter headway (distance to the car in front of it), the greater the fraction of autonomous vehicles on a road, the greater the maximum flow that the road can serve \cite{lazar2018altruism}. Accordingly, our extension of CTM lies in the dependence of cell parameters on the \emph{autonomy level}, or the fraction of autonomous vehicles, in each cell. 

We use our capacity model in conjunction with Daganzo's CTM formulation in \cite{daganzo1994cell, daganzo1995cell}, the combination of which we describe in the following. We consider a network of roads with a single origin and destination for all vehicles in the network. The origin and destination are connected by the set of simple paths $\pathset$. Each path is composed of a number of cells, and we denote the set of cells composing path $\pathidx$ by $\cellset_\pathidx$. We generally use $\cellidx$ and $\pathidx$ as indices for cells and paths, respectively.

\begin{figure*}[t]
	\centering
	\includegraphics[width=0.8\linewidth]{./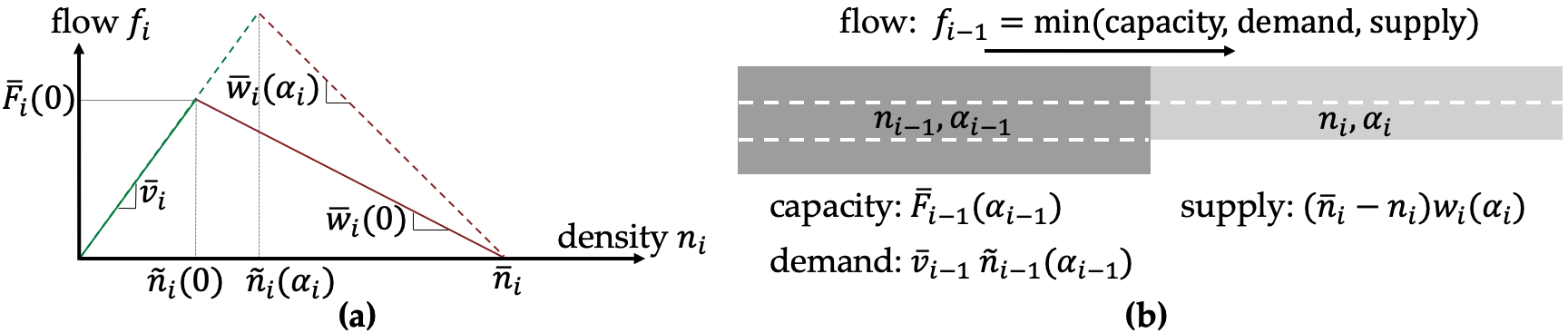}
	\caption{\textbf{(a)} Fundamental diagram of traffic governing vehicle flow in each cell of the Cell Transmission Model. The solid line corresponds to a cell with only human-driven vehicles; the dashed line represents a cell with both vehicle types at autonomy level $\autlev_\cellidx$. Green and red respectively represent a cell in free-flow and congestion. \textbf{(b)} The flow from one cell to another is a function of the density $\dens$ and autonomy level $\autlev$ in each cell. In both figures, we suppress the notation for path $\pathidx$.}
	\label{fig:FDTs}
	\vspace{-12px}
\end{figure*}

In the CTM, every cell has a \emph{critical density}, and when the density of a cell exceeds the critical density, that cell is \emph{congested}. We model the critical density as being dependent on the autonomy level. This is because autonomous vehicles maintain a different nominal headway than human-driven vehicles; in other words, autonomous vehicles may require more space in front of them due to prediction error, or less space, as they may react faster than human drivers. Accordingly, we use the model in \cite{lazar2017routing} to model the capacity of a cell.

Using this model, each cell $\cellidx$ has a free-flow velocity,  $\ffvelocity_{\cellidx}$, as well as a nominal headway for vehicles traveling at the free-flow velocity --- $\spacehuman_{\cellidx}$ cells/vehicle for human-driven vehicles and $\spaceaut_{\cellidx}$ for autonomous vehicles. The capacity of the cell then varies with the autonomy level, denoted $\autlev_{\cellidx} \in [0,1]$. We use $\numlanes_{\cellidx}$ to denote the number of lanes in a cell. We model vehicles as slowing down when the headway experienced decreases below the nominal headway required and accordingly model the critical density as follows, as in \cite{askari2017effect, lazar2017routing, lazar2018altruism, mehr2018can}:
\begin{equation}\label{eq:crit_dens}
	\critdens_{\cellidx}(\autlev_{\cellidx}) :=  \numlanes_{\cellidx}/(\autlev_{\cellidx}\spaceaut_\cellidx + (1-\autlev_{\cellidx})\spacehuman_\cellidx).
\end{equation}
Each cell also has a vehicle density, $\dens_{\cellidx} = \denshum_{\cellidx} + \densaut_{\cellidx}$, where $\denshum_{\cellidx}$ and $\densaut_{\cellidx}$ are, respectively, the number of human-driven and autonomous vehicles. Thus, $\autlev_{\cellidx} = \densaut_{\cellidx}/(\denshum_{\cellidx}+\densaut_{\cellidx})$. As the cells are very large compared to the vehicles, we consider these quantities to be continuous variables. As mentioned above, CTM has two regimes for vehicle flow: free-flow, when cell density is less than the critical density, and congestion, when cell density is greater than the critical density but less than the \emph{jam density} $\jamden_{\cellidx}$, the density at which flow  stops completely.

Three factors limit the flow from one cell to another. One is the \emph{capacity}, or maximum flow out of a cell, which is the flow of vehicles that traverse the cell at the critical density:
\begin{equation}\label{eq:capacity}
	\capacity_{\cellidx}(\autlev_{\cellidx}) := \ffvelocity_{\cellidx}  \critdens_{\cellidx}(\autlev_{\cellidx}) \; .
\end{equation}
The flow out of a cell is limited by the sending function of that cell, which is the minimum of the capacity of the cell and the \emph{demand} of vehicles in the cell: $\send_\cellidx(\autlev_\cellidx(\timeidx)) = \min( \capacity_{\cellidx}(\autlev_{\cellidx}), \ffvelocity_{\cellidx}\dens_{\cellidx}(\timeidx))$. The flow entering a cell is limited by that cell's receiving function, which is the minimum of its capacity and its \emph{supply} of vehicles: $\recv_\cellidx(\autlev_\cellidx(\timeidx)) = \min(\capacity_{\cellidx}(\autlev_{\cellidx}),  (\jamden_{\cellidx}- \dens_{\cellidx})\shockspd_{\cellidx}(\autlev_{\cellidx}))$, where $\shockspd_{\cellidx}$ is the \emph{shockwave speed}, the speed at which slowing waves of traffic propagate upstream: $\shockspd_{\cellidx}(\autlev_{\cellidx}):=\ffvelocity_{\cellidx}\critdens_{\cellidx}(\autlev_{\cellidx})/(\jamden_{\cellidx}-\critdens_{\cellidx}(\autlev_{\cellidx})).$ In the following, we use $\flow_\cellidx(\timeidx)$ to denote the flow out of cell $\cellidx$ at time $\timeidx$ and $\flowin_\cellidx(\timeidx)$ to denote the flow into cell $\cellidx$. We use the standard superscripts for human-driven and autonomous flow, with the relationships $\flowhum_\cellidx(\timeidx) + \flowaut_\cellidx(\timeidx) = \flow_\cellidx(\timeidx)$ and $\flowinhum_\cellidx(\timeidx) + \flowinaut_\cellidx(\timeidx) = \flowin_\cellidx(\timeidx)$. Accordingly,
\begin{align}\label{eq:dens_update}
	\denshum_{\cellidx}(\timeidx\!+\!1) & 
	=\denshum_{\cellidx}(\timeidx)\! +\! \flowinhum_\cellidx(\timeidx) \!- \! \flowhum_{\cellidx}(\timeidx) \; , \nonumber \\
	\densaut_{\cellidx}(\timeidx\!+\!1) & = \densaut_{\cellidx}(\timeidx)\! +\! \flowinaut_\cellidx(\timeidx) \! -\! \flowhum_{\cellidx}(\timeidx)  \; .
\end{align}

Since some cells might be a part of more than one path, we also track the paths of the human-driven and autonomous vehicles in each cell. We use $\routinghum_{\cellidx}(\pathidx, \timeidx)$ and $\routingaut_{\cellidx}(\pathidx, \timeidx)$ to denote the fraction of human-driven and autonomous vehicles, respectively, in cell $\cellidx$ at time $\timeidx$ that are taking path $\pathidx$. If cell $\cellidx$ is not on path $\pathidx$, let $\routinghum_{\cellidx}(\pathidx, \timeidx) = \routingaut_{\cellidx}(\pathidx, \timeidx) = 0$.

Extending the development in \cite{blubook_vol1_v085}, we formulate a calculation of the flow of mixed autonomous vehicles through general junctions. We define $O$ as the set of intersections, or junctions, in the network. We use $\Xi(o)$ to denote the set of turning movements through intersection $o$, with a turning movement denoted by a tuple, such as ${[}i,o,j{]}\in \Xi(o)$, where $i$ denotes the incoming cell, and $j$ denotes the outgoing cell. As before, we consider all cells to have one direction of travel. For intersection $o$ we define a set of conflict points $C(o)$, and $\Xi(c)$ denotes the set of turning movements through the intersection which pass through conflict point $c$, where $c \in C(o)$. These routes may have different priority levels, so for each ${[}i,o,j{]} \in \Xi(c)$ we define $\beta^c_{ioj} > 0 $ as the priority of turning movement ${[}i,o,j{]}$ through conflict point $c$. Each conflict point has some supply $R_c$, which we assume is independent of the level of autonomy of the vehicles passing through it. The relative priority of the turning movements will determine the relative flow of each turning movement through the conflict point. In a slight abuse of notation, we use $f_{ioj}(k)$ to denote the \emph{total} flow of vehicles through turning movement ${[}i,o,j{]}$ at time $k$; we use $f^\text{h}_{ioj}(k)$ and $f^\text{a}_{ioj}(k)$ to denote the flow of human and autonomous vehicles, respectively, through the turning movement. We use $\Gamma(o)$ and $\Gamma^{-1}(o)$ to denote the set of cells exiting and entering junction $o$, respectively.

We then calculate the flows at each time step as in Algorithm~\ref{alg:flow_calculation}.

\begin{algorithm}
	\caption{Flow Calculation}\label{alg:flow_calculation}
\begin{algorithmic}[1]
\Procedure{ Flow Calculation}{ Intersection $o$}
   \State \begin{align*}\\[-30pt]
   \forall {[}i,o,j{]}\in \Xi(o),   \; & p^\text{h}_{ioj} \gets \sum_{p \in \pathset : i,j \in \mathcal{I}_p} \mu^\text{h}_i(p,k)  \\
   	& p^\text{a}_{ioj} \gets \sum_{p \in \pathset : i,j \in \mathcal{I}_p}  \mu^\text{a}_i(p,k)  \\
   	& p_{ioj} \gets \frac{n_i^\text{h}(k) p^\text{h}_{ioj} + n_i^\text{a}(k) p^\text{a}_{ioj} }{n_i^\text{h}(k) + n_i^\text{a}(k)}
   \end{align*}
   \State \begin{align*}\\[-30pt] \forall {[}i,o,j{]} \in \Xi(o),  \; &  f_{ioj} \gets 0  \\
   	&f^\text{h}_{ioj} \gets 0  \\
   	&f^\text{a}_{ioj} \gets 0  \\
   	&\tilde{S}_{ioj} \gets S_i(\alpha_i(k)) p_{ioj}  \\
   	 \forall (o,j) \in \Gamma(o),  \; & \tilde{R}_{oj} \gets R_j(\alpha_j(k))  \\
   	 \forall c \in C(o),  \; &  \tilde{R}_c \gets R_c 
   \end{align*}
   \State For all $ {[}i,o,j{]}\in \Xi(o)$, set $\delta_{ioj}$ such that 
   \begin{align*}
   	& \forall {[}i,o,j{]}\in \Xi(o) ,  \\
	& \quad  \forall {[}i',o,j'{]}\in \Xi(o) , \; \frac{\delta_{ioj}}{\delta_{i'oj'}} = \frac{p_{ioj}}{p_{i'oj'}} \; ,  \\
	& \quad   \text{where $i$ can equal $i'$ and $j$ can equal $j'$, and}  \\
	&  \forall c \in C(o); \; \forall {[}i,o,j{]}\in \Xi(c)  \\
	& \quad   \forall {[}i',o,j'{]}\in \Xi(c) , \; \frac{\delta_{ioj}}{\delta_{i'oj'}} = \frac{\beta^c_{ioj} p_{ioj}}{\beta^c_{i'oj'} p_{i'oj'}} \; , \\
	& \quad \text{where $i$ can equal $i'$ and $j$ can equal $j'$. } 
   \end{align*} 
   \State  $A \gets \Xi(o)$ 
   \While{ $A \neq \emptyset$ }
   	\State \begin{align*}\\[-30pt]
		&  \forall (o,j) \in \Gamma(o) , \;  \delta_{oj} \gets \sum_{ {[}i,o,j{]} \in A} p_{ioj}  \\
		&  \forall c \in C(o) , \;  \delta_{c} \gets \sum_{ {[}i,o,j{]} \in \Xi(c) \cap A} p_{ioj} 
	\end{align*}
	\State \begin{align*}\\[-30pt]
		\theta = \min \{ & \min_{ {[}i,o,j{]} \in A } \frac{ \tilde{S}_{ioj} }{ \delta_{ioj} }, \min_{ (o,j) \in \Gamma(o) , \delta_{oj} > 0 } \frac{ \tilde{R}_{oj} }{ \delta_{oj} },  \\
		& \min_{ c \in C(o) : \delta_c > 0} \frac{ \tilde{R}_{c} }{ \delta_{c} } \} 
	\end{align*}
	\State \begin{align*}\\[-30pt]
		\forall {[}i,o,j{]} \in A ,  \; & f_{ioj} \gets f_{ioj} + \theta \delta_{ioj}  \\
		& f^\text{h}_{ioj} \gets f^\text{h}_{ioj} + \theta \delta_{ioj} (1- \alpha_i(k)) \\
		& f^\text{a}_{ioj} \gets f^\text{a}_{ioj} + \theta \delta_{ioj}  \alpha_i(k)  \\
		& \tilde{S}_{ioj} \gets \tilde{S}_{ioj} - \theta \delta_{ioj}  \\
		\forall (oj) \in \Gamma(o) , \; &  \tilde{R}_{oj} \gets \tilde{R}_{oj} - \theta \delta_{oj}  \\
		\forall c \in C(o) ,  \; &  \tilde{R}_{c} \gets \tilde{R}_{c} - \theta \delta_{oj} 
	\end{align*}
	\State \begin{align*}\\[-30pt]
		& A \gets A \setminus \{ {[}i,o,j{]} \in A : \tilde{S}_{ioj} = 0 \}  \\
		& A \gets A \setminus \{ {[}i,o,j'{]} \in A : \tilde{R}_{oj} = 0 \land p_{ioj'} > 0 \}  \\
		& A \gets A \setminus \{ {[}i,o,j{]} \in c : \tilde{R}_{c} = 0 \;  \forall c \in C(o) \} 
	\end{align*}
   \EndWhile
   \State \textbf{return} $f_{ioj}$, $f^\text{h}_{ioj}$, $f^\text{a}_{ioj}$, $\forall {[}i,o,j{]} \in \Xi(o)$ 
\EndProcedure
\end{algorithmic}
\end{algorithm}

An interpretation of this algorithm is as follows. The set $A$ denotes the set of turning movements with flows that can yet be increased, and each turning movement is assigned a rate at which its flow increases. As sending and receiving limits are reached, turning movements are removed from $A$ until there are no more turning movements left to increase.

In more concrete terms, first calculate the fraction of vehicles in each incoming cell which are headed to each outgoing cell. Then initialize all flows to $0$ and initialized the unused sending and receiving capacity for each cell and conflict point. We then find relative rates of flow increase, $\delta_{ioj}$, for the turning movements. In the loop, we calculate the similar rates of flow increases for the receiving cells and conflict points based on the rates previously found. Then, the flows are increased by the established rates until either a sending limit, a cell receiving limit, or conflict point capacity is reached. Any turning movement that has reached its sending limit is removed from the set of turning movements with further flow increases, $A$. Similarly, any turning movement that exits from a cell which has reached its receiving limit is removed from $A$, and the same with turning movements through conflict points which have reached their capacity. The loop repeats until $A$ is empty.

Having calculated the flow through the intersection, the states of each cell is updated as follows. We compute the incoming flows for the outgoing cells as follows:
\begin{align}\label{eq:incoming_flow}
	\forall (o,j) \in \Gamma(o) ,  \; & y_j^\text{h}(k) = \sum_{{[}i,o,j{]} \in \Xi(o)}f^\text{h}_{ioj}  \nonumber \\
	& y_j^\text{a}(k) = \sum_{{[}i,o,j{]} \in \Xi(o)}f^\text{a}_{ioj}  \nonumber \\
	& y_j(k) = y_j^\text{h}(k) + y_j^\text{a}(k) 
\end{align}

To calculate the outgoing flows of the incoming cells,
\begin{align}\label{eq:outgoing_flow}
	\forall (i,o) \in \Gamma^{-1}(o) , \; & f_i^\text{h}(k) = \sum_{{[}i,o,j{]} \in \Xi(o)}f^\text{h}_{ioj}  \nonumber \\
	& f_i^\text{a}(k) = \sum_{{[}i,o,j{]} \in \Xi(o)}f^\text{a}_{ioj}  \nonumber \\
	& f_i(k) = f_i^\text{h}(k) + f_i^\text{a}(k) \; , 
\end{align}
where $\Gamma^{-1}(o)$ denotes the set of cells going into intersection $o$. \eqref{eq:dens_update} updates the human-driven and autonomous vehicle densities of each cell at the next time step. To update the fraction of vehicles in the outgoing cells on each path,
\begin{align*}
	& \forall (o,j) \in \Gamma(o) , \\
	& \mu_j^\text{h}(p,k+1) = \\
	& \; \frac{  \sum_{ {[}i',o,j{]} \in \Xi(o)}f^\text{h}_i(k) \mu^\text{h}_i(p,k) + \mu^\text{h}_j(p,k)(n^\text{h}_j(k) - f^\text{h}_j(k) )  }{ n^\text{h}_j(k+1)  } \; , \\
	& \mu_j^\text{a}(p,k+1) =   \\
	& \; \frac{  \sum_{ {[}i',o,j{]} \in \Xi(o)}f^\text{a}_i(k) \mu^\text{a}_i(p,k) + \mu^\text{a}_j(p,k)(n^\text{a}_j(k) - f^\text{a}_j(k) )  }{ n^\text{a}_j(k+1)  }  \; .
\end{align*}

\textbf{Accidents.} To evaluate the performance of the developed RL policy in reacting to disturbances, we consider stochastic accidents occuring in the network, each of which causes one lane to be closed. We let accidents occur in any cell at any time with equal probability as long as the jam density does not decrease below the current density of the cell. Each accident is cleared out after some number of time steps, drawn from a Poisson distribution. If $\bar{\numlanes}_{\cellidx}$ lanes of cell $\cellidx$ are closed due to accidents, then the jam density and the critical density for the cell reduce to $(\numlanes_{\cellidx} - \bar{\numlanes}_{\cellidx}) / \numlanes_{\cellidx}$ of their original values. Thus, accidents introduce time-dependency to these variables.

\section{Network dynamics: routing for humans and autonomous vehicles}
\label{sec:network_dynamics}
As mentioned above, we consider a network with a set of possible paths $\pathset$. We use $\demandhumRV$ and $\demandautRV$ to denote the human-driven and autonomous vehicle demands, respectively. We model all vehicles entering the network as entering a \emph{queue}, a single cell with infinite capacity. We use $0$ for the index of this cell. The routing choices of autonomous vehicles leaving the queue is determined by the central controller, and the routing choices of human-driven vehicles leaving it are determined from the latencies associated with each path, detailed below.

\subsection{Human choice dynamics}
\label{sec:human_choice_dynamics}

In general, people wish to minimize the amount of time spent traveling. However, people do not change routing choices instantaneously in response to new information; rather they have some inertia and only change strategies sporadically. Moreover, we assume people only account for current conditions and do not strategize based on predictions of the future~\cite{sandholm2010population}. Accordingly, we use an \emph{evolutionary dynamic} to describe how a population of users choose their routes.\footnote{Alternately, one could model individual users as learning agents, posing it as a Multi-Agent Reinforcement Learning problem. However, we consider large networks with too many human agents for this to be feasible.} Specifically, we model the human driver population as following Hedge Dynamics, also called Log-linear Learning~\cite{cesa2006prediction,marden2012revisiting,blume1993statistical}.

Let $(\routinghum_0(\pathidx, \timeidx))_{\pathidx\in\pathset}$ represent the initial routing of human-driven vehicles at time $k$; accordingly, $\sum_{\pathidx \in \pathset}\routinghum_0(p,k) = 1$ for all $k$. Humans will update their routes based on their estimates of how long it will take to traverse each path. However, it is not always possible to predict travel time accurately on general networks, since vehicles entering later on a different path may influence the travel time of vehicles entering earlier. Because of this, we consider that humans have an estimate $\hat{\ell}_{\pathidx}(\timeidx)$ of the true latency $\ell_{\pathidx}(\timeidx)$.
With these estimates, the routing vector is updated as follows.
\begin{equation}\label{eq:humanchoice}
\routinghum_{0}(\pathidx, \timeidx+1) = \frac{\routinghum_{0}(\pathidx, \timeidx)\exp(-\learningratehum(\timeidx)\hat{\latency}_{\pathidx}(\timeidx)) }{\sum_{\pathidx' \in \pathset}\routinghum_{0}(\pathidx', \timeidx)\exp(-\learningratehum(\timeidx)\hat{\latency}_{\pathidx'}(\timeidx)) } \; .
\end{equation}

The ratio of the volume of vehicles using a path at successive time steps is inversely proportional to the exponential of the delay experienced by users of that path. The learning rate $\learningratehum(\timeidx)$ may be decreasing or constant. Krichene \emph{et al.} introduce this model in the context of humans' routing choices and simulate a congestion game with Amazon Mechanical Turk users to show the model accurately predicts human behavior \cite{krichene2018learning}. We note that though we use this specific model for human choice for our simulations, the control method described later does not require this specific choice of human choice model. Our theoretical analysis similarly is not restricted to this choice of dynamics and works for any human choice model in which all fixed points of the dynamics satisfy human selfishness.

\vspace{-10px}
\subsection{Autonomous vehicle control policy}
\label{sct:control}

We assume that we have control over the routing of autonomous vehicles. We justify this by envisioning a future in which autonomous vehicles are offered as a service rather than a consumer product. We then assume that a city can coordinate with the owner of an autonomous fleet to decrease congestion in the city. Moreover, unlike traditional tolling, coordination between autonomous vehicles and city infrastructure allows for fast-changing and geographically finely quantized tolls, enabling routing control to be achieved through incentives~\cite{biyik2019green,beliaev2021incentivizing}. The initial routing of autonomous vehicles is then our control parameter by which we influence the state of traffic on the network. Consistent with the previous notation, we denote the initial autonomous routing as $(\routingaut_0(\pathidx, \timeidx))_{\pathidx \in \pathset} \in \mathbb{R}^{|\pathset|}_{\ge 0}$, where $\sum_{\pathidx \in \pathset}\routingaut_0(\pathidx,\timeidx) = 1$.

We assume the existence of a central controller, or social planner, which dictates $\routingaut_0$ by processing the state of the network. At each time step, we let the controller observe:
\begin{itemize}[nosep]
	\item the number of human-driven and autonomous vehicles in each cell and in the queue,
	\item binary states for each lane that indicates whether the lane is closed due to an accident or not.
\end{itemize}
We use deep RL to arrive at a policy for the social planner to control the autonomous vehicle routing, $\routingaut_0$. Since the state space is very large and both state and action spaces are continuous, a dynamic programming-based approach is infeasible. For instance, even if we discretized the spaces, say with $10$ quantization levels, and did not have accidents, we would have $10^{82}$ possible states and $10$ actions for a moderate-size network with only $2$ paths and $40$ cells in total.

We wish to minimize the total latency experienced by users, which is equal to summing over time the number of users in the system at each time step. Accordingly, the stage cost is:
\begin{align}\label{eq:cost_function}
J(\timeidx) = \sum_{\cellidx\in \cellset}\dens_{\cellidx}(\timeidx) \: .
\end{align}

Due to their high performance in continuous control tasks \cite{schulman2015trust,schulman2017proximal}, we employ policy gradient methods to learn a policy that produces $\routingaut_0$ given the observations. Specifically, we use state-of-the-art PPO with an objective function augmented by adding an entropy bonus for sufficient exploration \cite{schulman2017proximal,mnih2016asynchronous}. We build a deep neural network, and train it using Adam optimizer \cite{kingma2014adam}. An overview of the PPO method and the set of parameters we use are presented in the appendix (Sec.~\ref{sct:ppo_overview} and Sec.~\ref{sct:ppo_params}). Each episode has a fixed number of time steps. 

In order to evaluate the performance of our control policy, we use three criteria. The first is the \emph{throughput} of the network -- we wish to have a policy that can serve any feasible demand, thereby stabilizing the queue. The second is the \emph{average delay} experienced by users of the network, which we measure by counting the number of vehicles in the system. The third is the \emph{convergence} to some steady state; we wish to avoid wild oscillations in congestion. To contextualize the performance of our control policy in this framework, we first establish the performance of \emph{equilibria} of the network.

\vspace{-5px}
\section{Equilibrium analysis}
\label{sec:equilibria}

In this section, we examine the possible \emph{equilibria} of our dynamical system, which characterize the possible steady state behaviors of the system. A network with a given demand can have a variety of equilibria with varying average user delay. If our control achieves overall delay equal to that of the best possible equilibrium, it is a successful policy. Section~\ref{sec:results} shows empirically that our learned policy can achieve the best equilibrium in a variety of settings.

In this section, we first formulate an optimization which solves for the most efficient equilibrium, which is computationally hard. Motivated by this, we restrict the class of networks considered and prove theoretical properties of this restricted class. Using these properties, we formulate a new optimization formulation to solve for the most efficient equilibrium and prove it is solvable in polynomial time.

\vspace{-5px}
\subsection{Equilibrium Formulation}

We define two notions of equilibrium: one related to the vehicle flow dynamics, and one related to human choice dynamics.\footnote{These define equilibria in the sense of dynamical systems, and do not strictly correspond to game-theoretic notions of equilibria. Under Assumption \ref{assump:starting_flow} below, the set of equilibria for the dynamics of human choice will correspond to the set of Nash Equilibria where the payoff is the path latency.}

\begin{definition}[Path Equilibrium]
	We define a \emph{path equilibrium} for path $\pathidx$ as a set of cell densities $(\denshum_\cellidx(\timeidx),\densaut_\cellidx(\timeidx))_{\cellidx \in \cellset_\pathidx}$ that, for a given constant flow entering the first cell on the path, $\flowinhum_\cellidx(\timeidx)$ and $\flowinaut_\cellidx(\timeidx)$, the cell densities are constant.
\end{definition}

\begin{definition}[Network Equilibrium]\label{def:network_eq}
	We define a \emph{network equilibrium} as a set of cell densities $(\denshum_\cellidx(\timeidx), \densaut_\cellidx(\timeidx))_{\cellidx \in \cellset}$ and human vehicle routing $(\routinghum_0(\pathidx, \timeidx))_{\pathidx \in \pathset}$, such that for a given constant entering flow $\flowinhum_0(\timeidx)$ and $\flowinaut_0(\timeidx)$ and a given constant autonomous vehicle routing $(\routingaut_0(\pathidx, \timeidx))_{\pathidx \in \pathset}$, the human vehicle routing, subject to the dynamics in \eqref{eq:humanchoice}, is constant.
\end{definition}

We are interested in satisfying both notions of equilibrium -- both the path equilibrium, which deals with the vehicle flow dynamics, and the network equilibrium, which deals with the human choice dynamics. Accordingly, the pair can be considered a Stackelberg Equilibrium for a leader controlling the autonomous vehicles who wishes to maximize the social utility in the presence of selfish human demand. We formulate the following optimization to solve for the most efficient equilibrium (satisfying both notions of equilibrium defined above), \emph{i.e.} the equilibrium which minimizes the total travel time of all users of the network. We drop all time indices since we consider quantities that are constant over time.
\begin{flalign*}
\min_{(\denshum_\cellidx, \densaut_\cellidx, \flowhum_\cellidx, \flowaut_\cellidx, \flowinhum_\cellidx, \flowinaut_\cellidx, \routinghum_\cellidx(\pathidx), \routingaut_\cellidx(\pathidx), \ell_\pathidx)_{\cellidx \in \cellset, \pathidx \in \pathset}}\sum_{\cellidx \in \cellset} \dens_\cellidx &&
\end{flalign*}
\begin{align*}
	\text{s.t. } \forall o \in O: & \text{ \textbf{procedure} \textsc{Flow Calculation}(Intersection $o$)} \\
	& \eqref{eq:incoming_flow}, \eqref{eq:outgoing_flow} \\
	\forall \cellidx \in \cellset: \; & \flowinhum_\cellidx = \flowhum_\cellidx, \quad \flowinaut_\cellidx = \flowaut_\cellidx, \quad \autlev_\cellidx = \densaut_\cellidx/(\denshum_\cellidx + \densaut_\cellidx) \\
	& \sum_{\cellidx' \in \upstreamcells_\cellidx}\! \flowhum_{\cellidx'}\routinghum_{\cellidx'}(\pathidx) \!=\! \flowhum_\cellidx \routinghum_\cellidx(\pathidx), \: \sum_{\cellidx' \in \upstreamcells_\cellidx}\! \flowaut_{\cellidx'}\routingaut_{\cellidx'}(\pathidx) \!=\! \flowaut_\cellidx \routingaut_\cellidx(\pathidx) \\
	& \ell_\pathidx = \sum_{\cellidx \in \pathidx} (\denshum_\cellidx + \densaut_\cellidx)/(\flowhum_\cellidx + \flowaut_\cellidx) \\
	\forall \pathidx,\pathidx' \in \pathset: \; & \routinghum_0(\pathidx)(\ell_\pathidx - \ell_{\pathidx'}) \le 0
\end{align*}

While this formulation solves for the most efficient equilibrium of any traffic network, it is computationally difficult, especially due to the final constraint. Due to this, we introduce a restricted class of networks that we consider for the remainder of this section, which allows us to compute equilibria in polynomial time with respect to the number of paths.
\begin{definition}[Bottleneck]
	We define a \emph{bottleneck} as a regular junction at which the number of lanes decreases, decreasing the capacity of the cells.
\end{definition}

\begin{assumption}\label{asmp:parallel}
	We consider a parallel network in which leaving the first cell, vehicles choose a path and paths do not share cells, meaning that each cell is identified with only one path, aside from the downstream-most cell which has infinite capacity. We further consider that all cells in the path have the same model parameters, except for a bottleneck after the $\numnbcells$ upstream-most cells.
\end{assumption}
In other words, we consider a parallel network where each path is composed of identical cells except for a single junction with a decrease in the number of lanes. Fig.~\ref{fig:schematic} shows an example of such a network. For ease of analysis, we first establish properties of Path Equilibria, then Network Equilibria.

\subsection{Path equilibrium}
\label{sct:path_eq}

As mentioned above, we restrict our considered class of paths to those with a single bottleneck, meaning one point on the path at which cell capacity drops. Formally, we consider each path $\pathidx$ to have $\numnbcells$ cells, each with $\numlanesnb_\pathidx$ lanes, followed by $\numbcells$ cells downstream, each with $\numlanesb_\pathidx$ lanes, where $\numlanesb_\pathidx < \numlanesnb_\pathidx$. We define $\laneratio_\pathidx:=\numlanesb_\pathidx/\numlanesnb_\pathidx \in(0,1)$. 

In a slight abuse of notation, we use the subscript $\pathidx$ for parameters that are constant over a path under Assumption \ref{asmp:parallel}, and the superscript $\text{n}$ for cells before the bottleneck and $\text{b}$ for the bottleneck and cells downstream of it. We now present a theoretical result that completely analytically characterizes the path latencies that can occur at equilibrium.

\begin{theorem}\label{thm:road_equilibrium}
	Under Assumption~\ref{asmp:parallel} a path $\pathidx$ with flow dynamics described in Section~\ref{sec:flow_dynamics} that is at Path Equilibrium will have the same autonomy level in all cells. Denote this autonomy level $\autlev_{\pathidx}$. If the vehicle flow demand is strictly less than the minimum cell capacity, the path will have no congested cells. Otherwise, the path will have one of the following latencies, where $\gamma_\pathidx \in \{0, 1, 2, \ldots, \numnbcells\}$:
	\begin{align*}
		\latency_{\pathidx} = \frac{|\cellset_\pathidx|}{\ffvelocity_\pathidx} + \gamma_\pathidx \frac{(1-\laneratio_\pathidx) \jamdennb_\pathidx (\autlev_{\pathidx} \spaceaut_\pathidx + (1-\autlev_{\pathidx}) \spacehuman_\pathidx)}{\laneratio_\pathidx  \ffvelocity_\pathidx \numlanesnb_\pathidx} \; .
	\end{align*}
\end{theorem}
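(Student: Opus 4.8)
The plan is to break the statement into three parts: (i) uniform autonomy level across cells at equilibrium, (ii) the free-flow case when demand is below the minimum capacity, and (iii) enumeration of the congested latencies via the integer $\gamma_\pathidx$. For part (i), I would argue from the conservation equations \eqref{eq:state_updates}: at a steady state every cell carries a common through-flow $\flow_\pathidx$ (since $\dens_{\pathidx,\cellidx}(\timeidx+1) = \dens_{\pathidx,\cellidx}(\timeidx)$ forces $\flow_{\pathidx,\cellidx-1} = \flow_{\pathidx,\cellidx}$ for all $\cellidx$), and likewise the autonomous sub-flow $\autlev_{\pathidx,\cellidx}\flow_\pathidx$ is constant in $\cellidx$. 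Since $\flow_\pathidx > 0$ whenever there is positive demand, dividing gives $\autlev_{\pathidx,\cellidx}$ independent of $\cellidx$; call it $\autlev_\pathidx$. (If $\flow_\pathidx = 0$ the road is empty and the claim is vacuous.) With $\autlev_\pathidx$ fixed, all the cell parameters $\critdens_{\pathidx,\cellidx}$, $\capacity_{\pathidx,\cellidx}$, $\shockspd_{\pathidx,\cellidx}$, $\jamden_{\pathidx,\cellidx}$ depend only on whether the cell is upstream (n) or downstream (b) of the bottleneck, by Assumption \ref{assump:bottleneck}.

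For part (ii), note that each cell in free-flow has latency contribution $1/\ffvelocity_\pathidx$ (the cell is traversed at free-flow velocity), giving total $\numcells_\pathidx/\ffvelocity_\pathidx$; I must check that if the common flow $\flow_\pathidx$ is strictly below $\min(\capacitynb_\pathidx(\autlev_\pathidx), \capacityb_\pathidx(\autlev_\pathidx)) = \capacityb_\pathidx(\autlev_\pathidx)$ (the downstream, smaller-capacity cells), then no cell need be congested: one can satisfy \eqref{eq:flow} with all densities on the free-flow branch, and a congested configuration is not forced. The cleaner statement is that the all-free-flow profile is a valid equilibrium and has the stated latency.

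The heart of the argument is part (iii). Here I would reason about where congestion can sit. Once $\flow_\pathidx$ reaches the bottleneck capacity $\capacityb_\pathidx(\autlev_\pathidx)$, the downstream cells run exactly at capacity (critical density, hence free-flow latency $1/\ffvelocity_\pathidx$ each — congestion in CTM at exactly critical density still traverses at free-flow speed, so the downstream cells contribute $\numbcells/\ffvelocity_\pathidx$). Upstream of the bottleneck, a cell is either in free-flow (latency $1/\ffvelocity_\pathidx$) or congested. A congested upstream cell must have density $\denscong$ solving $(\jamdennb_\pathidx - \denscong)\shockspdnb_\pathidx(\autlev_\pathidx) = \flow_\pathidx = \capacityb_\pathidx(\autlev_\pathidx)$, i.e. the supply of that cell equals the bottleneck throughput; solving for $\denscong$ and computing the traversal time $\denscong/\flow_\pathidx$ minus the free-flow time $1/\ffvelocity_\pathidx$ should yield exactly the per-cell increment $(1-\laneratio_\pathidx)\jamdennb_\pathidx(\autlev_\pathidx \spaceaut_\pathidx + (1-\autlev_\pathidx)\spacehuman_\pathidx)/(\laneratio_\pathidx \ffvelocity_\pathidx \numlanesnb_\pathidx)$ after substituting $\capacityb_\pathidx(\autlev_\pathidx) = \ffvelocity_\pathidx \critdensb_\pathidx(\autlev_\pathidx)$, $\critdensb_\pathidx = \laneratio_\pathidx \numlanesnb_\pathidx/(\autlev_\pathidx \spaceaut_\pathidx + (1-\autlev_\pathidx)\spacehuman_\pathidx)$, and the definitions of $\jamdennb$, $\jamdenb$. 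The key structural observation — which I expect to be the main obstacle to state precisely — is that the congested upstream cells must be contiguous and occupy the $\gamma_\pathidx$ cells immediately upstream of the bottleneck: a congested cell restricts its inflow to $\flow_\pathidx$, which is consistent with its upstream neighbor being either free-flow or congested, but a free-flow cell sandwiched between two congested cells (or downstream of a congested cell) cannot sustain the required flow balance, since a congested cell's upstream face can only accept flow $\le \flow_\pathidx$ while passing flow exactly $\flow_\pathidx$. One shows the congested region is a suffix of the upstream segment, so the number of congested cells is an integer $\gamma_\pathidx \in \{0,1,\dots,\numnbcells\}$, and each contributes the same latency increment. Summing, $\latency_\pathidx = \numcells_\pathidx/\ffvelocity_\pathidx + \gamma_\pathidx \cdot (\text{increment})$, which is the claim. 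I would also remark that every such $\gamma_\pathidx$ is realizable by an appropriate equilibrium density profile, so the enumeration is tight.

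The main difficulty is the discrete combinatorial step in part (iii): pinning down that congestion forms a contiguous block anchored at the bottleneck, rather than an arbitrary subset of upstream cells, and handling the boundary behavior at the first cell (where the inflow constraint $\flow_{\pathidx,0} \le (\jamden_{\pathidx,1}-\dens_{\pathidx,1})\shockspd_{\pathidx,1}$ interacts with the queue model). The continuous algebra reducing the per-cell congested traversal time to the stated closed form is routine once the critical/jam/shockwave definitions are substituted.
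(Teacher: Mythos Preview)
Your proposal is essentially the same three-step decomposition the paper uses: Lemma~\ref{lma:homogeneousautlevel} (uniform autonomy level from flow conservation), Lemma~\ref{lma:congestion_profiles} (free-flow uniqueness when demand is below bottleneck capacity, and congested cells forming a contiguous suffix of the upstream segment anchored at the bottleneck), and Lemma~\ref{lma:congested_latency} (the per-cell congested traversal time $\denscong/\flow_\pathidx$ reducing to the stated increment). Your algebraic plan for part~(iii) and your structural argument for the contiguous-block shape match the paper's reasoning closely.

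One point to tighten: in part~(ii) you only argue that the all-free-flow profile \emph{is} an equilibrium (``no cell need be congested''), but the theorem asserts uniqueness (``the road \emph{will} have no congested cells''). The paper closes this by a backward induction from the exit cell: since there is no supply limit on $\flow_{\pathidx,\numcells_\pathidx}$ and $\flow_\pathidx < \capacity_{\pathidx,\numcells_\pathidx}$, cell $\numcells_\pathidx$ must be in free-flow; then if cell $\cellidx$ is uncongested its supply strictly exceeds $\flow_\pathidx$, forcing $\flow_{\pathidx,\cellidx-1} = \ffvelocity_\pathidx \dens_{\pathidx,\cellidx-1}$ and hence cell $\cellidx-1$ uncongested as well. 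Without this step your part~(ii) does not rule out spurious congested equilibria.
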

\begin{proof}
	The proof is composed of three lemmas. We first establish a property of path equilibria that allows us to treat the vehicle flow as if it were composed of a single car type. With this, we use the CTM to characterize possible equilibria on a path. We then derive the delay associated with each congested cell. Combining the latter two lemmas yields the theorem. 
	
	\begin{lemma}\label{lma:homogeneousautlevel}
		A path in equilibrium with nonzero incoming flow has the same autonomy level in all cells of the path, which is equal to the autonomy level of the vehicle flow onto the path. Formally, a path $\pathidx$ with demand $(\demandhum_{\pathidx}, \demandaut_{\pathidx})$ in equilibrium has, for all cells $\cellidx$ in $\cellset_\pathidx$,
		\begin{equation*}
		\autlev_{\cellidx} = \demandaut_{\pathidx}/(\demandhum_{\pathidx} + \demandaut_{\pathidx}) \; .
		\end{equation*}
	\end{lemma}
	We defer the proof of the lemma to the appendix. With this, our path equilibria analysis simplifies to that of single-typed traffic, with the autonomy level treated as a variable parameter. The next lemma, similarly to Theorem 4.1 of \cite{gomes2008behavior}, completely characterizes the congestion patterns that can occur in cell equilibria. For this lemma, we consider the cell indices in a path to be increasing, where the cell immediately downstream from a cell $\cellidx$ has index $\cellidx\!+\!1$.
	
	\begin{lemma}\label{lma:congestion_profiles}
		Under Assumption \ref{asmp:parallel}, if the demand on a path is less than the minimum capacity of its cells, they will be uncongested at path equilibrium. Otherwise, a path with demand equal to the minimum cell capacity will have $\numnbcells$ possible path equilibria, corresponding to one of the following sets of congested cells, where $j$ is the index of the $\numnbcells$th cell:
		\begin{align*}
			\left\{\emptyset, \{j\}, \{j-1, j \}, \ldots, \{ j - \numnbcells +1, \ldots, j-2, j-1, j \} \right\} .
		\end{align*}
		\begin{proof}
			As mentioned above, this lemma relates closely to Theorem~4.1 of \cite{gomes2008behavior}. However, we cannot directly apply that theorem due to differing assumptions; namely they assume $\capacity_{\cellidx+1}=(\jamden_\cellidx - \critdens_\cellidx) \shockspd_\cellidx$ for all $\cellidx$. We therefore offer a similar proof, tailored to our assumptions.
			
			For ease of notation, we drop all path subscripts $\pathidx$ as well as the cell index for the free-flow velocity parameter $\ffvelocity$. In light of Lemma~\ref{lma:homogeneousautlevel}, we also suppress the autonomy level arguments to capacity $\capacity_\cellidx$ and critical density $\critdens_\cellidx$. The flow equation then becomes $\flow_\cellidx = \min(\ffvelocity\dens_\cellidx, \; (\jamden_{\cellidx+1} - \dens_{\cellidx + 1})\shockspd_{\cellidx + 1}, \; \capacity_\cellidx, \capacity_{\cellidx+1} )$.
			
			We begin by proving that if the vehicle flow demand is strictly less than the minimum capacity, \emph{i.e.} the bottleneck capacity, then the only equilibrium has no congested cells. Let us use $j'$ to denote the index of the final cell in the path. Under Assumption \ref{asmp:parallel} there is no supply limit to the flow exiting a path, so $\flow_{j'} = \min(\ffvelocity \dens_{j'}, \capacity_{j'})$. Since $\flow_0 = \flow_{j'} < \capacity_{j'}$, $\flow_0 = \flow_{j'} = \ffvelocity \dens_{j'}$. The definition of capacity, $\capacity_\cellidx = \ffvelocity \critdens_\cellidx$, then implies that $\dens_{j'} < \critdens_{j'}$, meaning that cell ${j'}$ is uncongested, so $\ffvelocity \dens_{j'} < (\jamden_{j'} - \dens_{j'})\shockspd_{j'}$.
			
			This is the base case for a proof by induction. Consider cell $\cellidx$ that is uncongested (\emph{i.e.} $\dens_\cellidx < \critdens_\cellidx$). Since by assumption all cells have flow strictly less than the cell's capacity, $\flow_\cellidx = \ffvelocity \dens_\cellidx < \capacity_\cellidx$. Then consider the flow entering cell $\cellidx$: $\flow_{\cellidx-1} \!=\! \min(\ffvelocity \dens_{\cellidx - 1}, (\jamden_\cellidx - \dens_\cellidx)\shockspd_\cellidx, \capacity_{\cellidx-1}) \!=\! \flow_{\cellidx} \!<\! \capacity_\cellidx \!<\! (\jamden_\cellidx \!-\! \dens_\cellidx)\shockspd_\cellidx$.
			
			The fact that $\capacity_\cellidx \le \capacity_{\cellidx - 1}$ then implies that $\flow_{\cellidx-1} = \ffvelocity \dens_{\cellidx - 1}$, so cell $\cellidx-1$ is uncongested, proving the lemma's first statement.
			
			The second statement assumes the flow on the path is equal to the minimum capacity. The cells in the bottleneck segment all have the same capacity, which we denote $\capacity^\text{b}$; this capacity is less than the capacity of the cells in the nonbottleneck segment. This means all bottleneck cells will be operating at capacity (and therefore have vehicle density equal to their critical density); flow on the path is therefore equal to $\capacity^\text{b}$.
			
			We now turn to the nonbottleneck segment. We first note that if a nonbottleneck cell is uncongested then the preceeding cell must be uncongested as well, using the same reasoning as that proving the first statement above. Next, consider the flow out of the downstream-most cell of the nonbottleneck segment: $\flow_{j} \!=\! \min(\ffvelocity \dens_{j}, (\jamden_{j+1}\! -\! \dens_{j+1}) \shockspd_{j+1}, \capacity_{j}) \!=\! \capacity^\text{b} \!<\! \capacity_{j}$, so $\flow_{j} \!=\! \min(\ffvelocity \dens_{j}, (\jamden_{j+1} \!-\! \dens_{j+1}) \shockspd_{j+1})$.
			Cell $j$ can be uncongested, in which case the cell density is such that $\ffvelocity \dens_{j} = \capacity^\text{b}$, or the cell can be congested, in which case the second term dominates. Then, if nonbottleneck cell $\cellidx$ is congested, the flow into it is  $\flow_{\cellidx-1} \!=\! \min(\ffvelocity \dens_{\cellidx-1}, (\jamden_{\cellidx} \!-\! \dens_{\cellidx}) \shockspd_{\cellidx})$.
			Again, to achieve this flow, cell $\cellidx\!-\!1$ can be either congested or uncongested. As shown above, if uncongested, then all upstream cells must be uncongested as well, yielding the second statement in the lemma.	
		\end{proof}
	\end{lemma}
	
	We use these properties to find a closed-form expression for the latency incurred by traveling through a bottleneck cell, which when combined with Lemma \ref{lma:congestion_profiles}, completes the proof.
	
	\begin{lemma}\label{lma:congested_latency}
		The latency incurred by traveling through a congested cell is as follows.
		\begin{align*}
		\frac{1}{\ffvelocity_\pathidx} + \frac{(1-\laneratio_\pathidx)\jamdennb_\pathidx(\autlev_{\pathidx}\spaceaut_\pathidx + (1-\autlev_{\pathidx})\spacehuman_\pathidx)}{\laneratio_\pathidx \ffvelocity_\pathidx\numlanesnb_\pathidx} \; .
		\end{align*}
		\begin{proof}\phantom{\qedhere}
			Recall that we assume paths have a uniform free-flow velocity across all cells in a path, where path $\pathidx$ has free-flow velocity $\ffvelocity_\pathidx$. We define $\nbcellsset$ as the set of cells before the bottleneck, which have $\numlanesnb_\pathidx$ lanes. The remaining cells, with indices in the set $\cellset_\pathidx \setminus \nbcellsset$, have $\numlanesb_\pathidx$ lanes. Further recall the definition $\laneratio_\pathidx = \numlanesb_\pathidx / \numlanesnb_\pathidx$. Let $\capacitynb_\pathidx(\autlev_{\pathidx})$ denote the capacity of the cells before the bottleneck of path $\pathidx$ with autonomy level $\autlev_{\pathidx}$ and let $\capacityb_\pathidx(\autlev_{\pathidx})$ be the same for the bottleneck cell. Note that $\capacityb_\pathidx(\autlev_{\pathidx}) = \laneratio_{\pathidx} \capacitynb_\pathidx(\autlev_{\pathidx})$. Similarly, let $\shockspdnb_\pathidx(\autlev_{\pathidx})$ and $\shockspdb_\pathidx(\autlev_{\pathidx})$ denote the shockwave speed for prebottleneck cells and bottleneck cell, respectively, on path $\pathidx$ with autonomy level $\autlev_{\pathidx}$, as with jam densities $\jamdennb_\pathidx$ and $\jamdenb_\pathidx$ and critical densities $\critdensnb_\pathidx(\autlev_{\pathidx})$ and $\critdensb_\pathidx(\autlev_{\pathidx})$.

			Lemma~\ref{lma:congestion_profiles} establishes all possible combinations of congested cells that a path at equilibrium can experience. We now investigate how much delay each configuration induces on the path, parameterized by the autonomy level of the path. By Lemma~\ref{lma:congestion_profiles} and the definitions of $\laneratio$ and capacity \eqref{eq:capacity},
			\begin{equation}\label{eq:flow_into_bottleneck}
			\flow_\pathidx=\capacityb_\pathidx = \laneratio_\pathidx \capacitynb_\pathidx(\autlev_{\pathidx}) = \laneratio_\pathidx \shockspdnb_\pathidx(\autlev_{\pathidx}) (\jamdennb_\pathidx - \critdensnb_\pathidx(\autlev_{\pathidx}))  \; .
			\end{equation}
			
			Let $\denscong_\pathidx(\autlev_{\pathidx})$ denote the vehicle density in a congested cell on path $\pathidx$, which we know must occur upstream of the bottleneck (Lemma~\ref{lma:congestion_profiles}). Then, the flow entering a congested cell before the bottleneck is $\flow_\pathidx = \shockspdnb_\pathidx(\autlev_{\pathidx}) (\jamdennb_\pathidx - \denscong_\pathidx(\autlev_{\pathidx}))$. Equating this with \eqref{eq:flow_into_bottleneck}, we find $\denscong_\pathidx(\autlev_{\pathidx}) = (1-\laneratio_\pathidx) \jamdennb_\pathidx + \laneratio_\pathidx  \critdensnb_\pathidx(\autlev_{\pathidx})$.
			
			To use this to find the latency incurred by traveling through a congested cell, we divide the density by the flow, as follows.
			\begin{align*}
			\frac{\denscong_\pathidx(\autlev_{\pathidx})}{\flow_{\pathidx}} &= \frac{\denscong_\pathidx(\autlev_{\pathidx})}{\capacityb_\pathidx(\autlev_{\pathidx})} = \frac{(1-\laneratio_\pathidx) \jamdennb_\pathidx + \laneratio_\pathidx \critdensnb_\pathidx(\autlev_{\pathidx})}{\laneratio_\pathidx \ffvelocity_\pathidx \critdensnb_\pathidx(\autlev_{\pathidx})}  \\
			&= \frac{1}{\ffvelocity_\pathidx} + \frac{(1-\laneratio_\pathidx) \jamdennb_\pathidx (\autlev_{\pathidx} \spaceaut_\pathidx + (1-\autlev_{\pathidx}) \spacehuman_\pathidx)}{\laneratio_\pathidx  \ffvelocity_\pathidx \numlanesnb_\pathidx} \; .\qquad\qed
			\end{align*}
		\end{proof}
	\end{lemma}
Together, the lemmas prove the theorem.
\end{proof}

The two terms above are the free-flow delay and the per-cell latency due to congestion, respectively. Theorem~\ref{thm:road_equilibrium} allows us to calculate the possible latencies of a path as a function of its autonomy level $\autlev_{\pathidx}$. Since in a network equilibrium all used paths have the same latency, we can calculate network equilibria more efficiently than comprehensively searching over all possible routings. However, equilibria may not exist, even with a fine time discretization -- in equilibrium the path latencies must be equal, but by Theorem \ref{thm:road_equilibrium}, road latency is a function of the integer $\gamma_\pathidx$. To avoid this artifact, when analyzing network equilibria we consider the cells to be small enough that we can consider the continuous variable $\gamma_\pathidx \in [0,\numnbcells]$.

\vspace{-10px}
\subsection{Network equilibrium}
\label{sct:network_eq}

We define the best equilibrium to be the equilibrium that serves a given flow demand with minimum latency. We are now ready to establish properties of network equilibria, as well as how to compute the best equilibria. We use the following two assumptions in our analysis of network equilibrium.

\begin{assumption}\label{assump:ff_latency}
	No two paths have the same free-flow latency.
\end{assumption}

\begin{assumption}\label{assump:starting_flow}
	The initial choice distribution has positive human-driven and autonomous vehicle flow on each path.
\end{assumption}

Note that the Assumption \ref{assump:ff_latency} is not strictly necessary but is useful for easing analysis. A similar analysis could be performed in its absence. We justify Assumption \ref{assump:starting_flow} by noting that humans are not entirely rational and that our choice model does not capture all reasons a person may wish to choose a route, and some small fraction of people will choose routes that seem less advantageous at first glance.

\begin{theorem}\label{thm:network_eq}
	Under Assumptions \ref{asmp:parallel} and \ref{assump:ff_latency}, a routing that minimizes total latency when all users (both human drivers and autonomous users) are selfish can be computed in $O(|\pathset|^3\log|\pathset|)$ time. A routing that minimizes total latency when human drivers are selfish and autonomous users are controlled can also be computed in $O(|\pathset|^3\log|\pathset|)$ time.
\end{theorem}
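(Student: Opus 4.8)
The plan is to build on Theorem~\ref{thm:road_equilibrium}: at a network equilibrium each road $\pathidx$ has a single autonomy level $\autlev_\pathidx$ across its cells (Lemma~\ref{lma:homogeneousautlevel}); if congested it carries flow exactly equal to its bottleneck capacity $\capacityb_\pathidx(\autlev_\pathidx)=\ffvelocity_\pathidx\numlanesb_\pathidx/(\autlev_\pathidx\spaceaut_\pathidx+(1-\autlev_\pathidx)\spacehuman_\pathidx)$ and has latency $\numcells_\pathidx/\ffvelocity_\pathidx + \gamma_\pathidx\, c_\pathidx(\autlev_\pathidx)$ with $\gamma_\pathidx\in[0,\numnbcells]$ in the continuous relaxation, where $c_\pathidx(\autlev_\pathidx)$ is the per-cell congestion latency of Lemma~\ref{lma:congested_latency}; if uncongested it carries at most $\capacityb_\pathidx(\autlev_\pathidx)$ at its free-flow latency $\numcells_\pathidx/\ffvelocity_\pathidx$. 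First I would record the combinatorial skeleton: at a human-choice equilibrium the paths with positive human flow share a common latency $\latency^*$, and any path whose free-flow latency is below $\latency^*$ must carry flow (human or autonomous), so in the ordering by free-flow latency (strict by Assumption~\ref{assump:ff_latency}) the set of paths carrying flow is a prefix $\{1,\dots,k\}$, with paths $1,\dots,k-1$ congested and path $k$ uncongested (up to a degenerate boundary case). An equilibrium is then described by the prefix length $k$, the autonomy levels $(\autlev_\pathidx)_{\pathidx\le k}$, and the parameters $(\gamma_\pathidx)_{\pathidx<k}$, subject to the equal-latency equations $\numcells_k/\ffvelocity_k=\numcells_\pathidx/\ffvelocity_\pathidx+\gamma_\pathidx c_\pathidx(\autlev_\pathidx)$ and to class-wise flow conservation.

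For the all-selfish case autonomous users also occupy only minimum-latency paths, so every routed vehicle sits on a latency-$\latency^*$ path and the total latency equals $\latency^*(\demandhum+\demandaut)$; minimizing total latency is therefore equivalent to minimizing the common latency $\latency^*$ over selfish equilibria. I would enumerate the $O(\numpaths)$ candidate prefixes and, for each, binary-search over $\latency^*$ (the source of the $\log\numpaths$ factor); for a fixed prefix and $\latency^*$ the question reduces to whether autonomy levels $\autlev_\pathidx\in[0,1]$ exist for which each congested path's required $\gamma_\pathidx=(\latency^*-\numcells_\pathidx/\ffvelocity_\pathidx)/c_\pathidx(\autlev_\pathidx)$ lies in $[0,\numnbcells]$ and the two conservation equations have a nonnegative solution. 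Since $\capacityb_\pathidx(\cdot)$ is monotone increasing and $c_\pathidx(\cdot)$ monotone decreasing in $\autlev_\pathidx$ under the shorter-headway assumption $\spaceaut_\pathidx\le\spacehuman_\pathidx$, each path contributes an interval of admissible autonomy levels and hence an interval of attainable pairs of total and autonomous flow, so the feasibility test is a straightforward interval/flow-matching computation; combined with the outer enumeration and search, this gives $O(\numpaths^3\log\numpaths)$.

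For the Stackelberg case the human-used paths still carry a common human latency, but the planner may route autonomous vehicles onto further paths or more heavily onto congested ones, so the total latency is no longer $\latency^*(\demandhum+\demandaut)$. The additional ingredient I would prove is an exchange/monotonicity lemma for the optimal autonomous allocation: adding autonomous mass to a congested road simultaneously raises its throughput and lowers its per-cell congestion delay, so an optimal solution may be taken with the human-used paths forming a prefix, and the autonomous mass filled greedily onto the congested roads where this trade is most favorable, spilling onto off-prefix roads only when the capacity relief it buys on the prefix outweighs the delay the diverted vehicles incur. Granting this structure, for each of the $O(\numpaths)$ prefixes and each binary-searched human latency the optimal autonomous routing and the resulting total latency can be computed in $O(\numpaths^2)$ time, again yielding $O(\numpaths^3\log\numpaths)$.

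The main obstacle is the coupling --- present in both cases, sharpest in the Stackelberg one --- between the autonomy levels $\autlev_\pathidx$, which jointly set bottleneck capacities and per-cell congestion delays, and the two global conservation constraints tying the $\autlev_\pathidx$ to the fixed human and autonomous demands. Establishing that this coupled problem has enough monotone/convex structure (respectively, a valid greedy-exchange argument) to be solved path-by-path in polynomial time, while correctly handling the continuous relaxation $\gamma_\pathidx\in[0,\numnbcells]$ and the degenerate marginal path, is the crux of the argument.
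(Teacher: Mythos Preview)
Your structural skeleton is correct and matches the paper: at a best equilibrium the used roads form a prefix in the free-flow-latency ordering, roads $1,\dots,p'-1$ are congested at their bottleneck capacity, and road $p'$ is in free-flow (this is the paper's Lemma~\ref{lma:best_eq}). From that point on, however, your argument diverges from the paper's and leaves the hard step unfinished.

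First, your complexity accounting is off. Once the prefix index $p'$ is fixed, the common latency is pinned down: $\latency^*=\numcells_{p'}/\ffvelocity_{p'}$, since road $p'$ is in free-flow. There is nothing to binary-search over in $\latency^*$. In the paper the $\log\numpaths$ factor comes from binary search over the \emph{index} $p'$ itself (feasibility is monotone in $p'$), and the $O(\numpaths^3)$ is the cost of solving one linear program per candidate $p'$.

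Second, and more importantly, the paper does not resolve the per-prefix subproblem by the interval/flow-matching heuristic you sketch, nor (in the Stackelberg case) by a greedy exchange lemma. The key observation you are missing is that the capacity constraint $\flowhum_\pathidx+\flowaut_\pathidx=\capacityb_\pathidx(\autlev_\pathidx)$ for a congested road unwinds to the \emph{linear} equation $\spacehuman_\pathidx\,\flowhum_\pathidx+\spaceaut_\pathidx\,\flowaut_\pathidx=\ffvelocity_\pathidx\numlanesb_\pathidx$. Substituting this into the equal-latency constraint $\gamma_\pathidx\,\latencycong_\pathidx(\autlev_\pathidx)=\numcells_{p'}/\ffvelocity_{p'}-\numcells_\pathidx/\ffvelocity_\pathidx$ makes $\gamma_\pathidx$ affine in $\flowaut_\pathidx$, so the entire feasibility check (all-selfish case) or latency minimization (Stackelberg case, with linear objective $\sum_{\pathidx>p'}\flowaut_\pathidx\,\numcells_\pathidx/\ffvelocity_\pathidx$) is a linear program in the variables $(\flowhum_\pathidx,\flowaut_\pathidx,\gamma_\pathidx)$. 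This linearization is precisely what dissolves the ``coupling'' you flag as the main obstacle; without it, your interval-matching and greedy-exchange proposals remain incomplete sketches rather than proofs, and the stated running time is unjustified.
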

\begin{proof}
	
	To establish properties of network equilibria, we introduce some notation. We use $\ffl_\pathidx = |\cellset_\pathidx|/\ffvelocity_\pathidx$ to denote the free-flow latency of path $\pathidx$. We also use $\pathset_{\le \ffl_{\pathidx}} = \{ \pathidx' \in \pathset : \ffl_{\pathidx'} \le \ffl_{\pathidx} \}$, which denotes the set of paths with free-flow latency less than or equal to that of path $\pathidx$. We similarly define the expression with other comparators, \emph{e.g.} $\pathset_{< \ffl_{\pathidx}}$ or $\pathset_{> \ffl_{\pathidx}}$.
	
	This proposition follows from Definition \ref{def:network_eq} and Assumption~\ref{assump:starting_flow}. The next lemma follows, with proof deferred to the appendix.
\begin{proposition}\label{prop:equilibrium}
	In a network equilibrium,
	\begin{enumerate}[nosep]
		\item All paths with selfish drivers have the same latency, and
		\item All paths without selfish drivers have equal or greater latency.
	\end{enumerate}
\end{proposition}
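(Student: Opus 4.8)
The plan is to obtain both statements directly from the fact that a network equilibrium is, in particular, a fixed point of the human choice dynamics \eqref{eq:humanchoice}, together with the positivity of the initial routing guaranteed by Assumption~\ref{assump:starting_flow}. For statement~1, note that at a network equilibrium the human routing vector $\routinghum$ and the latencies $\latency_\pathidx$ are constant in time, so $\routinghum$ solves the fixed-point equation read off from \eqref{eq:humanchoice}: $\routinghum_\pathidx = \routinghum_\pathidx \exp(-\learningratehum\latency_\pathidx)/Z$ with $Z := \sum_{\pathidx'}\routinghum_{\pathidx'}\exp(-\learningratehum\latency_{\pathidx'})$. For any road $\pathidx$ carrying selfish (human) drivers, i.e.\ $\routinghum_\pathidx>0$, we cancel the positive factor $\routinghum_\pathidx$ to get $\exp(-\learningratehum\latency_\pathidx)=Z$, hence $\latency_\pathidx = -\tfrac{1}{\learningratehum}\ln Z$, which is independent of $\pathidx$. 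So all roads with selfish drivers share a common latency $\latency^\star$, which is statement~1.

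For statement~2, fix a road $\pathidx$ with no selfish drivers at equilibrium ($\routinghum_\pathidx=0$) and a road $\pathidx'$ with $\routinghum_{\pathidx'}>0$, so $\latency_{\pathidx'}=\latency^\star$. By Assumption~\ref{assump:starting_flow}, $\routinghum_\pathidx(0)>0$, and since \eqref{eq:humanchoice} multiplies each coordinate by a strictly positive factor, $\routinghum_\pathidx(\timeidx)>0$ at every finite time; the value $0$ is therefore attained only in the limit. Dividing the $\pathidx$ and $\pathidx'$ updates yields the log-ratio recursion $\ln\frac{\routinghum_\pathidx(\timeidx+1)}{\routinghum_{\pathidx'}(\timeidx+1)} = \ln\frac{\routinghum_\pathidx(\timeidx)}{\routinghum_{\pathidx'}(\timeidx)} + \learningratehum(\timeidx)\bigl(\latency_{\pathidx'}(\timeidx)-\latency_\pathidx(\timeidx)\bigr)$. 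Since $\routinghum_\pathidx(\timeidx)\to 0$ while $\routinghum_{\pathidx'}(\timeidx)$ stays bounded away from $0$, the left side must tend to $-\infty$, so $\sum_\timeidx \learningratehum(\timeidx)\bigl(\latency_{\pathidx'}(\timeidx)-\latency_\pathidx(\timeidx)\bigr)=-\infty$. If instead $\latency_\pathidx<\latency^\star$ strictly, then for large $\timeidx$ the summand is at least $\epsilon\,\learningratehum(\timeidx)$ for some $\epsilon>0$, and since $\sum_\timeidx\learningratehum(\timeidx)=\infty$ for a constant or slowly decaying learning rate, that sum would be $+\infty$, a contradiction. Hence $\latency_\pathidx\ge\latency^\star$, which is statement~2.

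The main obstacle is making the limiting argument in statement~2 airtight: one has to fix precisely which notion of ``equilibrium'' is in force --- a genuine finite-time fixed point (for which statement~2 is false without Assumption~\ref{assump:starting_flow}, since a road with zero human flow and small latency is a legitimate but unreachable fixed point) versus an asymptotically approached steady state --- and, in the latter case, control the joint convergence of $\routinghum(\timeidx)$ and $\latency_\pathidx(\timeidx)$, impose a condition such as $\sum_\timeidx\learningratehum(\timeidx)=\infty$ so that decaying learning rates still drive unused roads out, and note that the borderline case $\latency_\pathidx=\latency^\star$ is harmless because only the inequality ``$\ge$'' is claimed. Statement~1, by contrast, is a one-line consequence of cancelling a positive factor in the fixed-point equation and requires no such care.
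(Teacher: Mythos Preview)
Your proposal is correct, and in fact considerably more careful than the paper's own treatment. The paper does not give a proof at all; it merely states that the proposition ``follows from the definition of equilibrium and Assumption~\ref{assump:starting_flow}'', implicitly appealing to the standard Wardrop characterization (selfish users occupy only minimum-latency roads). Your argument for statement~1 is exactly what that one-liner must unpack: at a fixed point of \eqref{eq:humanchoice}, cancelling the positive weight $\routinghum_\pathidx$ forces $\exp(-\learningratehum\latency_\pathidx)$ to equal the common normalizer, hence equal latencies.

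Where your write-up genuinely adds content is statement~2. You correctly diagnose that the bare fixed-point equation says nothing about a road with $\routinghum_\pathidx=0$ --- any latency is consistent there --- so some use of Assumption~\ref{assump:starting_flow} is \emph{necessary}, not decorative. The paper does not explain how; you supply a concrete mechanism via the log-ratio recursion and the fact that, under Assumption~\ref{assump:starting_flow}, the Hedge update keeps all human weights strictly positive at every finite time, so $\routinghum_\pathidx=0$ can only be an asymptotic limit. Your caveats (that one must commit to the ``limit of trajectories'' notion of equilibrium, and that a condition like $\sum_\timeidx\learningratehum(\timeidx)=\infty$ is needed) are exactly right and are things the paper silently assumes. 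In short: same destination, but you take the honest dynamical route where the paper simply invokes the Wardrop folklore.
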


\begin{lemma}\label{lma:ff}
	If the set of equilibria contains a routing with positive flow only on paths $\pathset_{\le \ffl_{\pathidx}} $, then there exists a routing in the set of equilibria in which path $\pathidx$ is in free-flow.
\end{lemma}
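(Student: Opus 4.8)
The plan is to argue by contradiction: suppose we have an equilibrium routing supported exactly on roads $[\pathidx]$ (that is, roads $1,\ldots,\pathidx$ ordered by increasing free-flow latency), but in this equilibrium road $\pathidx$ is congested. By Theorem~\ref{thm:road_equilibrium} and Lemma~\ref{lma:congestion_profiles}, a congested road carries flow exactly equal to its bottleneck capacity, and its latency exceeds its free-flow latency by $\gamma_\pathidx$ congested-cell delays with $\gamma_\pathidx \ge 1$. Since road $\pathidx$ has the largest free-flow latency among the used roads, and congestion only adds to latency, road $\pathidx$ at this equilibrium has the largest latency of any used road --- but by Proposition~\ref{prop:equilibrium}, all roads with selfish drivers share a common latency, so either road $\pathidx$ carries no selfish drivers or every used road is congested and at the same (elevated) latency.

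The key construction is then to produce a new equilibrium in which road $\pathidx$ is in free-flow. First I would decrease $\gamma_\pathidx$ continuously from its current value toward $0$; by Theorem~\ref{thm:road_equilibrium}, as long as the flow on road $\pathidx$ stays at its bottleneck capacity, the road remains a valid path-equilibrium for each intermediate value of $\gamma_\pathidx \in [0,\numnbcells]$ (we are using the continuous relaxation of $\gamma_\pathidx$ discussed after Theorem~\ref{thm:road_equilibrium}). As $\gamma_\pathidx$ decreases, road $\pathidx$'s latency $\latency_\pathidx$ strictly decreases. To keep a network equilibrium, I would rebalance flow among the other used roads $1,\ldots,\pathidx-1$ to keep their latencies matched to the selfish-user common latency: since those roads are congested and hence already at capacity, the total capacity available on roads $[\pathidx]$ is fixed at $\sum_{\pathidx'=1}^{\pathidx}\capacityb_{\pathidx'}$, so the demand (which equals this when the routing is supported on $[\pathidx]$) can still be served. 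The main subtlety is monotonicity and existence of a consistent rebalancing: I would use Proposition~\ref{prop:equilibrium} to track which roads carry selfish users, shifting flow off road $\pathidx$ (reducing $\gamma_\pathidx$) onto whichever roads can absorb it while maintaining the equality-of-latencies condition for selfish users and the inequality for non-selfish users.

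The hard part will be handling the bookkeeping of which roads have selfish versus non-selfish users as we deform the equilibrium, and checking that the latency-equality constraints of Proposition~\ref{prop:equilibrium} (and the feasibility constraints $\gamma_{\pathidx'} \in [0,\numnbcells]$, flow $\le$ capacity) can be satisfied simultaneously throughout the deformation --- in particular that no other road is forced past its own free-flow threshold in a way that re-congests road $\pathidx$. I expect this to go through because reducing congestion on the highest-free-flow-latency used road only \emph{lowers} the equilibrium latency level, which loosens rather than tightens the constraints on the remaining roads; formally, the continuity of $\latency_\pathidx$ in $\gamma_\pathidx$ from Theorem~\ref{thm:road_equilibrium} together with an intermediate-value argument gives a value of $\gamma_\pathidx$ (namely $\gamma_\pathidx = 0$, or the largest value still consistent with all roads' constraints) at which road $\pathidx$ is in free-flow while the whole profile remains a network equilibrium, completing the proof.
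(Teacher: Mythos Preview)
Your construction misidentifies the deformation mechanism. You propose to ``rebalance flow among the other used roads $1,\ldots,\pathidx-1$'' and to ``shift flow off road $\pathidx$,'' but as you yourself observe, at the starting equilibrium every congested road carries flow exactly equal to its bottleneck capacity, and the total demand equals $\sum_{\pathidx'=1}^{\pathidx}\capacityb_{\pathidx'}$. There is therefore no room to move flow between roads without violating a capacity constraint or leaving demand unserved. The bookkeeping you flag as ``the hard part'' is hard precisely because flow rebalancing is not the right lever here.

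The paper's argument keeps all flows fixed and instead decreases the congestion lengths $\gamma_{\pathidx'}$ on \emph{all} used roads simultaneously, at rates that preserve the common latency required by Proposition~\ref{prop:equilibrium}. From Theorem~\ref{thm:road_equilibrium}, each road's latency is $\numcells_{\pathidx'}/\ffvelocity_{\pathidx'} + \gamma_{\pathidx'}\latencycong_{\pathidx'}(\autlev_{\pathidx'})$, so with flows (and hence $\autlev_{\pathidx'}$) held fixed, the family of equal-latency profiles is a one-parameter family indexed by the common latency $L$. As $L$ decreases from its current value toward $\numcells_{\pathidx}/\ffvelocity_{\pathidx}$, each $\gamma_{\pathidx'}$ decreases strictly, so feasibility ($\gamma_{\pathidx'}\in[0,\numnbcells]$) is preserved; and since road $\pathidx$ has the largest free-flow latency, its $\gamma_\pathidx$ is the first to reach $0$. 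That is the whole construction. Your worry about selfish versus non-selfish users is also unnecessary here: under Assumption~\ref{assump:starting_flow} every used road carries selfish flow at equilibrium, so Proposition~\ref{prop:equilibrium} forces all used roads to share the common latency directly.
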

	
	\begin{lemma}\label{lma:best_eq}
		Under Assumption~\ref{assump:ff_latency}, if some users are selfish and some users are not selfish, then the best equilibrium will have the following properties:
		\begin{enumerate}[nosep]
			\item the path with largest free-flow latency used by selfish users will be in free-flow,
			\item all paths with lower free-flow latency will be congested,
			\item paths with greater free-flow latency may have nonselfish users, and
			\item paths used with larger free-flow latency that have nonselfish users on them will be at capacity, except perhaps the path with largest free-flow latency used by nonselfish users.
		\end{enumerate}
	\end{lemma}

\begin{proof}
	Consider a network with some selfish and some non-selfish (controlled) users. Let $\pathidx$ denote the path with the longest free-flow latency that contains selfish users. For the purpose of contradiction, let this path contain congested cells, and let this be the best equilibrium. Fix the nonselfish flow on all roads with longer free-flow latency than $\pathidx$. By Lemma \ref{lma:ff}, there exists an equilibrium for the selfish users in which $\pathidx$ is in free-flow. This results in less latency for the users on path $\pathidx$, and no selfish user will have greater delay (Proposition \ref{prop:equilibrium}). This contradicts the premise, proving the first property.
	
	The second property follows directly from Proposition~\ref{prop:equilibrium} and Assumption \ref{assump:ff_latency}. The third property follows from the definition of nonselfish users, which can take a path with a larger latency than other available paths. The best equilibrium minimized total latency. If there was a road with nonselfish users that was not at capacity, while another path with higher latency has positive flow, this would not be the best equilibrium, since a more efficient routing would shift flow from the higher latency path to the lower latency one. This yields the final property.
\end{proof}

Using these properties, we prove Theorem \ref{thm:network_eq}. We first consider the setting in which all users are selfish. We use $\latencycong_\pathidx(\autlev_{\pathidx})$ to denote the per-cell latency due to congestion, \emph{i.e.} $\latencycong_\pathidx(\autlev_{\pathidx}) = \frac{(1-\laneratio_\pathidx)\jamdennb_\pathidx(\autlev_{\pathidx}\spaceaut_\pathidx + (1-\autlev_{\pathidx})\spacehuman_\pathidx)}{\laneratio_\pathidx \ffvelocity_\pathidx\numlanesnb_\pathidx}$. Lemma~\ref{lma:best_eq} implies that for a given demand, all equilibria in the set of most efficient equilibria for that demand have one path that is in free-flow. We can then formulate the search for a best equilibrium as an optimization. We are helped by the fact that the best equilibria will use the minimum number of feasible paths, since all users experience the same delay. Then, for each candidate free-flow path (denote with index $p'$), check feasibility of only using paths $\pathset_{\le \ffl_{\pathidx'}}$, and choose a routing that minimizes $|\pathset_{\le \ffl_{\pathidx'}}|$, \emph{i.e.} the number of roads used. The reason for minimizing the number of used roads is that all users are experiencing the same latency (Proposition \ref{prop:equilibrium}) and in the best equilibrium, the road with flow on it that has longest free-flow latency will be in free-flow (Lemma \ref{lma:best_eq}). The feasibility can be checked as follows, with an optimization that utilizes Lemma~\ref{lma:best_eq}.
\begin{flalign*}
\noalign{$\displaystyle\argmin_{ (\flowhum_\pathidx, \flowaut_\pathidx)_{\pathidx \in \pathset_{\le \ffl_{\pathidx'}}} , \; \gamma \in \prod_{ \pathidx \in \pathset_{< \ffl_{\pathidx'}} }[0,\numnbcells] } 1$} &&
\end{flalign*}
\vspace{-1cm}
\begin{align*}
	\text{s.t. } & \sum_{\pathidx \in  \pathset_{\le \ffl_{\pathidx'}} } \flowhum_\pathidx = \demandhum, \sum_{\pathidx \in \pathset_{\le \ffl_{\pathidx'}} } \flowaut_\pathidx = \demandaut \\
	&\flowhum_{p'} + \flowaut_{p'} \le \capacity_{p'}(\frac{\flowaut_{p'}}{\flowhum_{p'} + \flowaut_{p'}}) \\
	\forall \pathidx \in \pathset_{< \ffl_{\pathidx'}}: \; &\gamma_\pathidx \latencycong_\pathidx(\frac{\flowaut_\pathidx}{\flowhum_\pathidx + \flowaut_\pathidx}) = \ffl_{\pathidx'} - \ffl_{\pathidx}  \\
	&\flowhum_\pathidx + \flowaut_\pathidx = \capacity_\pathidx(\frac{\flowaut_\pathidx}{\flowhum_\pathidx + \flowaut_\pathidx}) 
\end{align*}
The last constraint yields an affine relationship between $\flowhum_\pathidx$ and $\flowaut_\pathidx$ for paths $\pathset_{< \ffl_{\pathidx'}}$. Solving for $\flowhum_\pathidx$ and plugging into the first constraint yields an affine relationship between $\gamma_{\pathidx}$ and $\flowaut_\pathidx$. This way, the optimization can be converted to a linear program, and we must solve $\log|\pathset|$ linear programs to search the minimum feasible $p'$.
	
	This formulation assumes that all vehicles are selfish. If instead we consider selfish human drivers and fully controlled autonomous users, we can construct a similar optimization to find the best equilibrium.  For each choice of free-flow path $p'$, we minimize the total latency of the autonomous vehicles not on free-flow paths. We then choose the routing corresponding to free-flow path $p'$ that minimizes total latency (which may not necessarily minimize the number of paths used by human drivers). For each candidate free-flow path $\pathidx'$ we solve the following optimization.
\begin{flalign*}
	\noalign{$\displaystyle\argmin_{ (\flowhum_\pathidx, \flowaut_\pathidx)_{\pathidx \in \pathset}, \; \gamma \in \prod_{\pathidx \in \pathset_{< \ffl_{\pathidx'}} }[0,\numnbcells] }\sum_{\pathidx \in \pathset_{> \ffl_{\pathidx'}} }\flowaut_{\pathidx}\ffl_\pathidx $} &&
\end{flalign*}
\vspace{-1cm}
\begin{align*}
	\text{s.t. } & \sum_{\pathidx \in \pathset_{\le \ffl_{\pathidx'}} } \flowhum_\pathidx = \demandhum, \sum_{\pathidx \in \pathset} \flowaut_\pathidx = \demandaut\nonumber\\
	&\flowhum_{p'} + \flowaut_{p'} \le \capacity_{p'}(\frac{\flowaut_{p'}}{\flowhum_{p'} + \flowaut_{p'}})\\
	\forall \pathidx \in \pathset_{< \ffl_{\pathidx'} }: \; &\gamma_\pathidx \latencycong_\pathidx(\autlev_{\pathidx}) = \ffl_{\pathidx'} - \ffl_{\pathidx} \\
	&\flowhum_\pathidx + \flowaut_\pathidx = \capacity_\pathidx(\frac{\flowaut_\pathidx}{\flowhum_\pathidx + \flowaut_\pathidx}) \\
	\forall \pathidx \in \pathset_{> \ffl_{\pathidx'}} : \;&\flowaut_\pathidx \le \capacityb(1)
\end{align*}
	This can be reformulated as a linear program by the same mechanism. Again, we solve $\log|\pathset|$ linear programs and choose the one corresponding to the minimum feasible $\pathidx'$.
\end{proof}
Using these properties to compute optimal equilibria, we establish a framework for understanding the performance of our learned control policy. If the policy can reach the best equilibrium latency starting from arbitrary path conditions we view the policy as successful. We use this baseline to evaluate our experimental results in the following section.

A question then arises: if we have computed the best possible equilibria, why do we not directly implement that control? This approach is not fruitful, since the theoretical analysis of best equilibria gives the control policy only in the steady state. In practice, the network can start in any state, including worse equilibria, from which good equilibria will not emerge when autonomous vehicles unilaterally use their routing in the best equilibrium. Besides, our equilibrium analysis is limited to parallel networks and extending it to more general networks would yield a nonconvex optimization problem. A dynamic policy which depends on the current traffic state is therefore needed to guide the network to the best equilibrium. As shown in the following section, the policy learned via deep reinforcement learning achieves this guidance and reaches the best equilibrium in a variety of settings.

\section{Experiments and results}
\label{sec:results}
In all of the experiments\footnote{We make the code available in the supplementary material.}, we adopt the following parameters. All vehicles are $4$ meters long. Human drivers keep a $2$ second headway distance, whereas autonomous cars can keep $1$ second. Each time step corresponds to $1$ minute of real-life simulation. Each episode during deep RL training covers $5$ hours of real-life simulation ($300$ time steps). In test time, we simulate $6$ hours of real-life ($360$ time steps) to ensure the RL policy did not learn to minimize the latency in the first $300$ time steps and leave excess vehicles in the network at the end. We divide paths into the cells such that it takes $1$ time step to traverse each cell in free-flow. We initialize $\dens_{\cellidx}(0) \sim \textrm{unif}(0,1.2\critdens_{\cellidx})$ for all $\cellidx \in \cellset_\pathidx$ for all $\pathidx \in \pathset$. We set the standard deviations of the zero-mean Gaussian demand noise to be $\demandhum/10$ and $\demandaut/10$ for human-driven and autonomous vehicles, respectively.

Our overall control scheme can be seen in Fig.~\ref{fig:schematic}. As the learning model, we build a two-hidden-layer neural network, with each layer having $256$ nodes. We train an RL agent for each configuration that we will describe later on in simulated traffic networks that are based on the mixed-autonomy traffic model and the dynamics that we described in Sections~\ref{sec:flow_dynamics} and \ref{sec:network_dynamics}. All trainings simulate $40$ million time steps.\footnote{Other hyperparameter values we use for PPO are in the Appendix.} Depending on whether we evaluate our RL-based approach with (or without) the accidents, we enable (or disable) accidents at the training phase. However, we note that the number of possible accident configurations in the network is far more than the expected number of accidents during all training episodes. Hence, successfully handling accidents requires good generalization performance. Similar to accidents, the demand distributions match between training and test environments.

We compare our method with two baselines: first, a \emph{selfish} routing scheme, where all cars are selfish and use the human choice dynamics presented in Sec.~\ref{sec:human_choice_dynamics}, and second, a model predictive control (\emph{MPC}) based controller which can perfectly simulate the network other than the uncertainty due to accidents and noisy demand. It plans for the receding horizon of $4$ minutes and re-plans after every $1$ minute to minimize the number of cars in the network using a Quasi-Newton method (L-BFGS \cite{andrew2007scalable}). To increase robustness against the uncertainty, it samples $12$ different simulations of the network and takes the average. We note that this MPC can only be useful in small networks where some cars can enter the network and reach the destination within the MPC horizon of $4$ minutes. While increasing the horizon may help MPC operate in larger networks, it causes a huge computational burden. In fact, even though we parallelized the controller over $12$ Intel$^{\textrm{\textregistered}}$ Xeon$^{\textrm{\textregistered}}$ Gold 6244 CPUs (3.60 GHz), it took the controller $32$ seconds on average to decide the routing of autonomous vehicles for the next $1$ minute, which clearly indicates a practical problem. In all experiments, we set $\learningratehum(\timeidx)$ (and $\learningrateaut(\timeidx)$ for the selfish baseline) to be $0.5$ for all $k$.

\begin{figure}[t]
	\centering
	\includegraphics[width=\linewidth]{./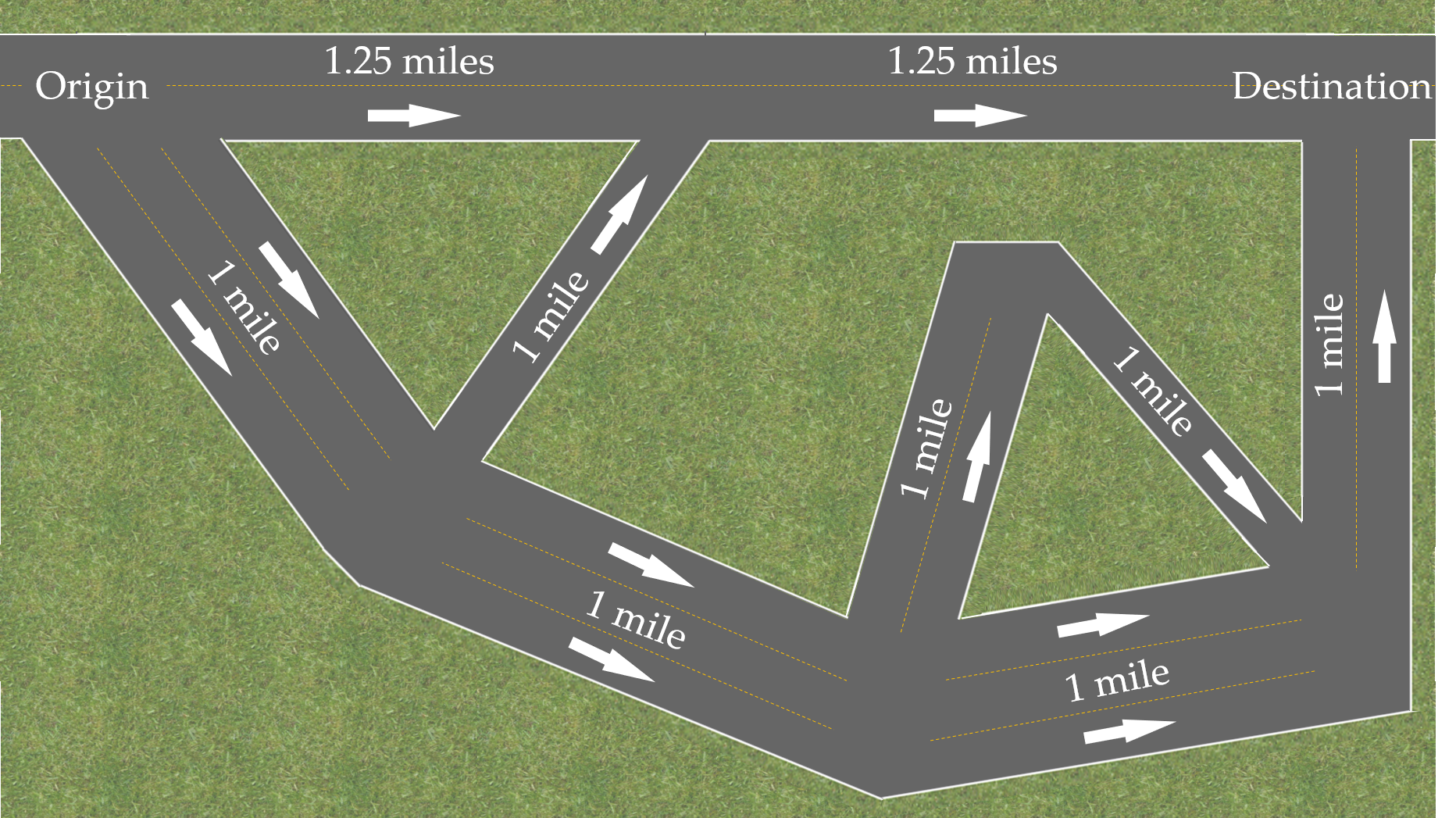}
	\caption{The small general class network used for experiments.}
	\label{fig:general_network}
\end{figure}

\subsection{General Class of Networks} We first start by considering a small network of $9$ cells and $7$ junctions ($1$ regular junction, $3$ merges and $3$ diverges) as shown in Fig.~\ref{fig:general_network}, where the priority levels of cells at merges are equal to their numbers of lanes. We set the autonomy level of the demand $\bar{\autlev}\!=\!0.6$ and the total demand $\demandhum + \demandaut = 2.60$ cars per second. We set the probability of accidents such that the expected frequency of accidents is $1$ per $100$ minutes, and clearing out an accident takes $30$ minutes on average \cite{houstontranstar2018}. For human choice dynamics, we assume humans' latency estimates are based on the current states of each cell, i.e., they estimate the latencies as if the network is in steady-state.

\begin{figure}[h]
	\centering
	\includegraphics[width=\linewidth]{./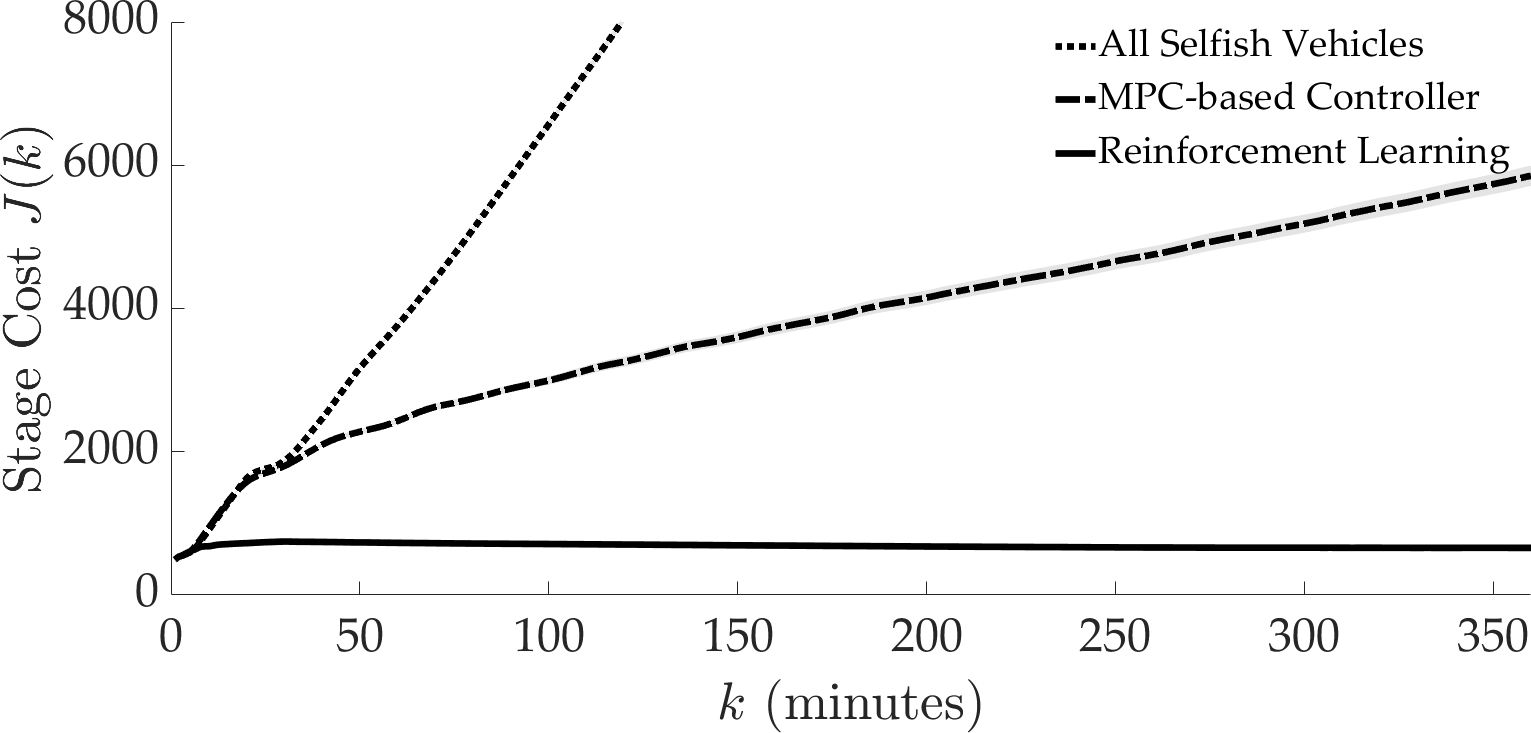}
	\caption{Time vs. number of cars under selfish, MPC and RL routing on the small general class network.}
	\label{fig:rl_vs_mpc}
\end{figure}

Fig.~\ref{fig:rl_vs_mpc} shows the number of cars in the network over time (mean $\pm$ standard error over $100$ simulations). While MPC controller improves over the selfish routing, they both suffer from linearly growing queues. On the other hand, RL controller stabilizes the queue and keeps the network uncongested.

\begin{figure}[h]
	\centering
	\includegraphics[width=\linewidth]{./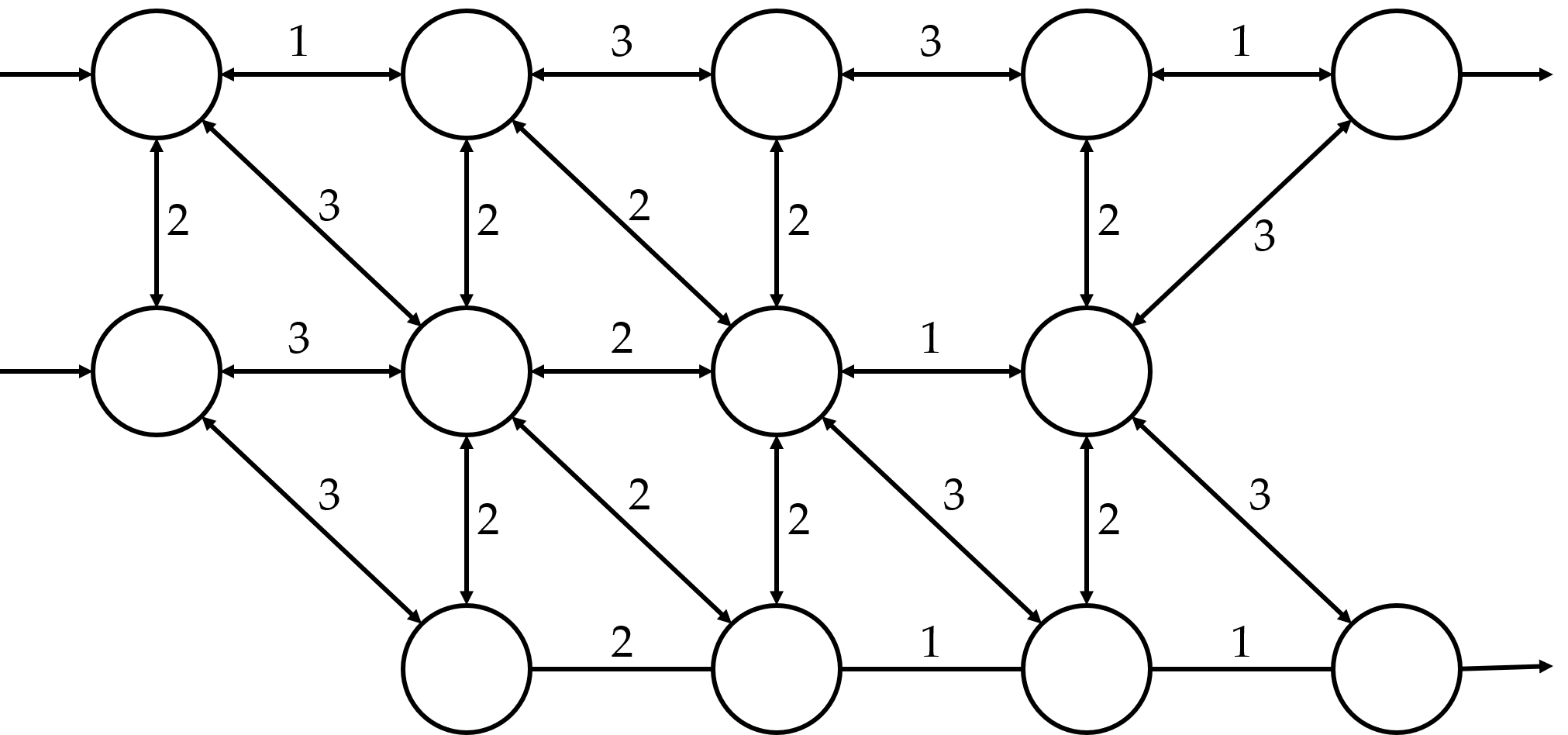}
	\caption{OW network (adapted from \cite{de2011modelling}) used for experiments.}
	\label{fig:ow_network}
\end{figure}

Next, we consider a larger network shown in Fig.~\ref{fig:ow_network} as a graph where the numbers noted on the links denote the number of cells in that link in one direction. Each cell, excluding queues which has infinite capacity, has $2$ lanes. This is a quantized version of the OW network due to Ortúzar and Willumsen \cite{de2011modelling}, and is widely used in the transportation literature \cite{ramos2015towards,bazzan2016multiagent,bazzan2014evolutionary,grunitzki2014individual}. This is a larger network with $4$ origin-destination pairs, $102$ cells (and $2$ queues) and $41$ junctions ($28$ junctions with only one incoming and one outgoing cell, and $13$ more general junctions). We set the total demand to be $\demandhum + \demandaut = 3.46$ cars per second, distributed equally to the $4$ origin-destination pairs in expectation. As there are $1752$ possible different simple paths that vehicles could be taking, our action space is $1752$ dimensional. While such an optimization is still possible with powerful computation resources, it might be unnecessary because an optimal solution is unlikely to utilize the paths that traverse too many cells. We therefore restrict our action space to the $10$ shortest paths (with respect to the free-flow latencies) between each origin and destination, and so adopt a $40$-dimensional action space. We keep the other experiment parameters the same as the small network experiment above.

Due to the network size and the computation cost to simulate the OW network, the MPC-based controller does not produce any useful results in a reasonable time as explained before. We instead implemented the greedy optimization method of \textcite{krichene2018social} as a baseline. Specifically we used a genetic algorithm for the optimization with a constraint on the run time of one minute, as it is an online algorithm. It is important to note that RL policy makes a routing decision within a millisecond during test time. We compare the RL controller with this greedy method and the selfish routing.

\begin{figure}[h]
	\centering
	\includegraphics[width=\linewidth]{./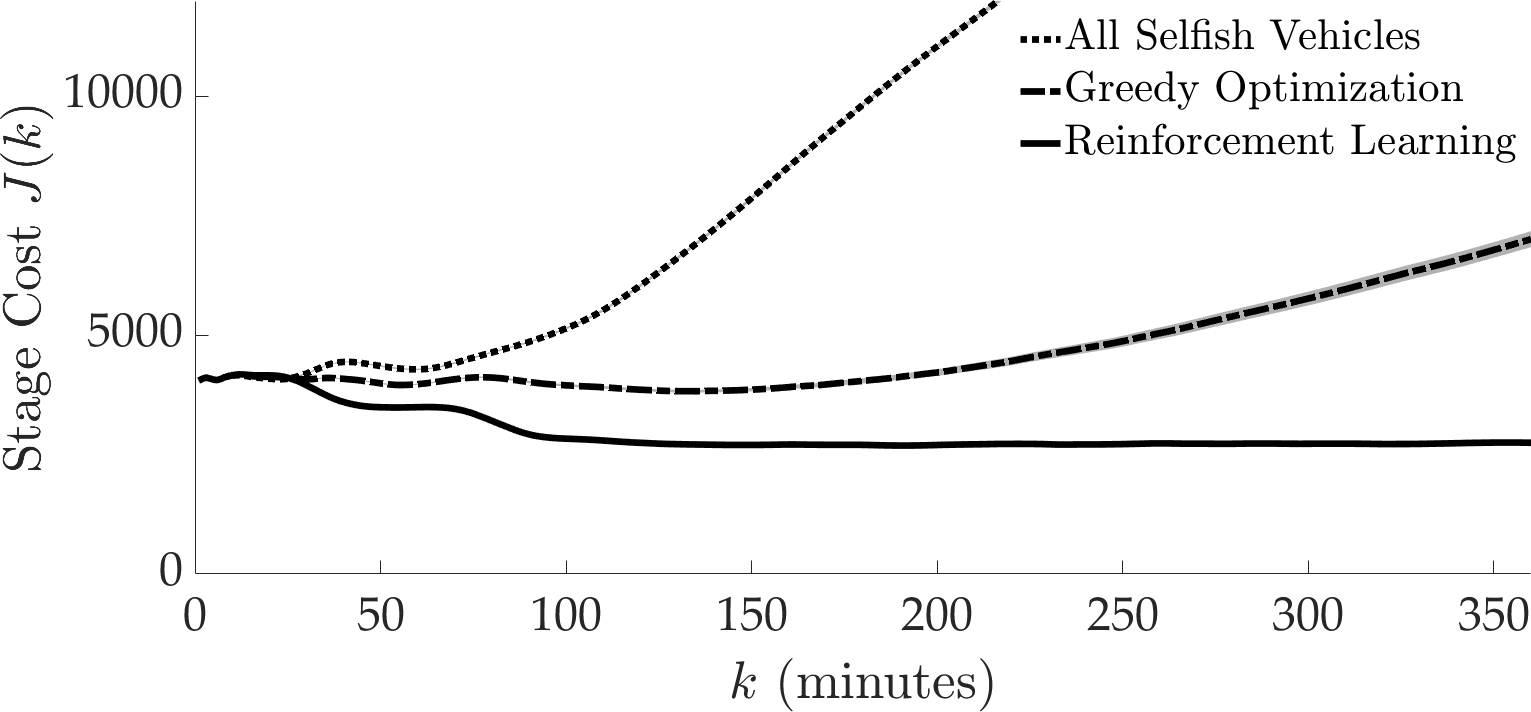}
	\caption{Time vs. number of cars under selfish, greedy and RL routing on OW network.}
	\label{fig:ow_results}
\end{figure}

Fig.~\ref{fig:ow_results} shows the number of cars in the network over time (mean $\pm$ standard error over $100$ simulations). Again, the selfish routing and the greedy optimization method of \cite{krichene2018social} suffer from linearly growing queues, while RL controller is able to stabilize the queues and keeps the network uncongested even though the network may start from a congested state. Furthermore, we check whether the reduced action space is really sufficient. We observe that, over $100$ episodes, 98.92\% of the autonomous vehicles were routed to the paths that are faster than the fastest path that is not in the action space.

To analyze the performance RL controller in comparison with the optimal equilibrium, we now move to parallel networks.

\subsection{Parallel Networks}
We consider a parallel network from downtown Los Angeles to the San Fernando Valley with $3$ paths. The highway numbers and the approximated parameter tuples (length, number of lanes, speed limit) are:
\begin{enumerate}[nosep]
	\item 110N (5 miles, 3 lanes, 60 mph); 101N (10 miles, 3 lanes for 5 miles then 2 lanes, 60 mph)
	\item 10E (5 miles, 4 lanes, 75 mph); 5N (10 miles, 4 lanes, 75 mph); 134W (5 miles, 3 lanes, 75 mph)
	\item 10W; 405N (both 10 miles, 4 lanes, 75 mph); 101S (5 miles, 3 lanes, 75 mph)
\end{enumerate}
As the cells are now not shared between the paths, we employ better latency estimates for human choice dynamics: we compute them as the actual latencies that would occur if there were no accidents and no more demand into the network.

We perform $3$ sets of experiments. In the first two, we disable accidents and analyze the effects of varying the number of paths and autonomy. As the shortest path has $15$ cells, we exclude MPC-based controller from our analysis as it is computationally prohibitive to adopt a receding horizon longer than $15$ minutes.

\begin{figure*}[t]
	\centering
	\includegraphics[width=\linewidth]{./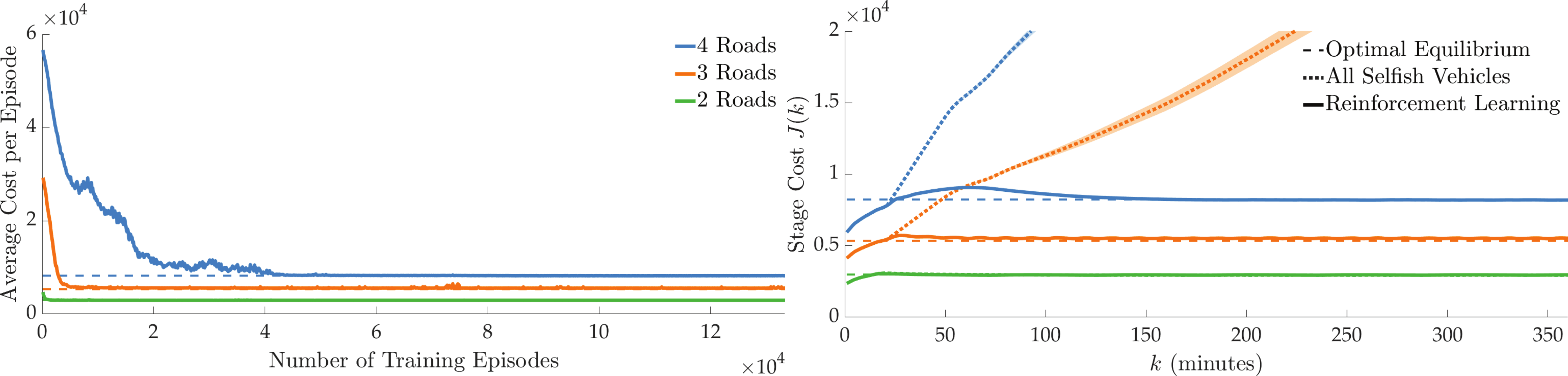}
	\caption{Varying number of paths. \textbf{(a)} Average number of cars in the system per episode during RL training. \textbf{(b)} Time vs. number of cars in the system for the comparison of selfish and RL routing in parallel networks.}
	\label{fig:experiment_fig1}
\end{figure*}

\textbf{Varying number of paths.} We first vary the number of paths $|\pathset|\!\in\!\{2,3,4\}$ by duplicating, or removing, the third path. We set the autonomy level of the demand $\bar{\autlev}\!=\!0.6$, and $\demandhum + \demandaut$ to be $95\%$ of the maximum capacity under this autonomy level.
We plot learning curves in Fig.~\ref{fig:experiment_fig1}~(a). It can be seen that even with $|\pathset| \!=\!4$ when observation space is $144$-dimensional, the agent successfully learns routing within $40$ million time steps. With randomized initial states, the agents learn routing policies that achieve nearly as good as optimal equilibrium for all $|\pathset|\!\in\!\{2,3,4\}$. In Fig.~\ref{fig:experiment_fig1}~(b), we plot the number of cars (mean $\pm$ standard error over $100$ simulations) in the system over time. While selfish routing causes congestion by creating linearly growing queues when $|\pathset|\!>\!2$, RL policies successfully stabilize queues and even reach car numbers of optimal equilibria.

\begin{figure*}[t]
	\centering
	\includegraphics[width=\linewidth]{./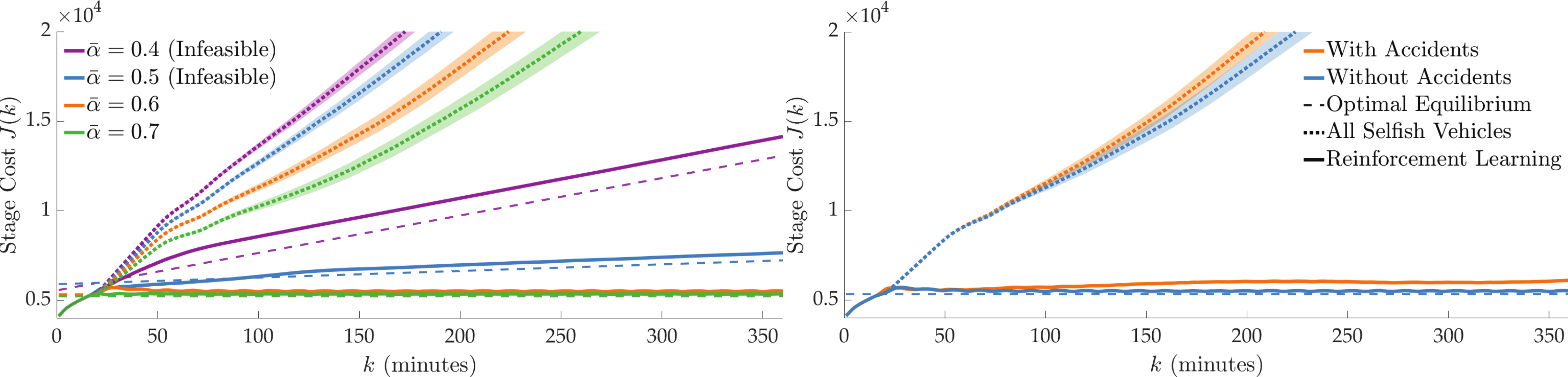}
	\caption{\textbf{(a)} Varying autonomy. \textbf{(b)} Varying the presence of accidents and noise in the demand.}
	\label{fig:experiment_fig2}
\end{figure*}

\textbf{Varying autonomy.} We take $|\pathset|\!=\!3$ and vary the autonomy of demand $\bar{\autlev}\in\{0.4,0.5,0.6,0.7\}$ without changing the total demand $\demandhum + \demandaut$. Note the demands are infeasible when $\bar{\autlev}\in\{0.4,0.5\}$.
In Fig.~\ref{fig:experiment_fig2}~(a), we plot the number of cars (mean $\pm$ standard error over $100$ simulations) in the system over time. The result is similar to the previous experiment when the demand is feasible. With infeasible demand, RL agent keeps a queue that is only marginally longer than the queue that optimal equilibrium would create. On the other hand, selfish routing grows the queue with much faster rates. These experiments show RL policy successfully handles random initializations.

\begin{figure*}[h]
	\centering
	\includegraphics[width=\linewidth]{./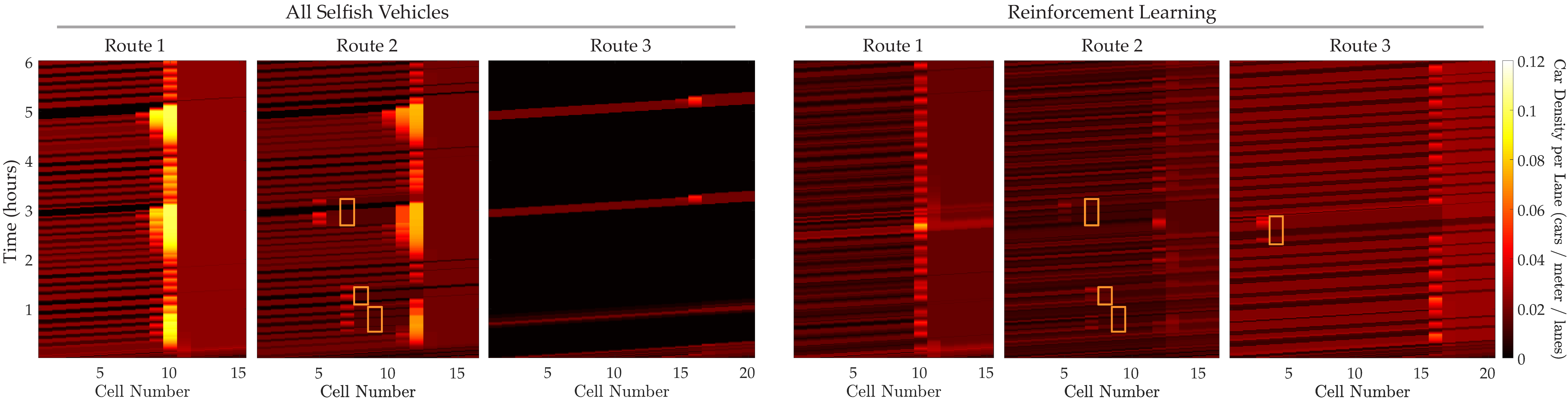}
	\caption{Space-time diagrams on a parallel traffic network with accidents and noisy demand. Orange rectangles represent accidents.}
	\label{fig:space_time_diagram}
\end{figure*}

\textbf{Accidents.} In the third set, we fix $|\pathset|=3$ and $\bar{\autlev}=0.6$ for the same total average demand and enable accidents. As before, the expected frequency of accidents is $1$ per $100$ minutes, and clearing out an accident takes $30$ minutes on average.
Fig.~\ref{fig:experiment_fig2}~(b) shows the RL policy successfully handles accidents, indicating a good generalization performance by the RL controller. To give a clearer picture, we provide the space-time diagrams and the detailed information about the system states of a sample run in Figs.~\ref{fig:space_time_diagram} and \ref{fig:experiment_fig3}, respectively. Fig.~\ref{fig:space_time_diagram} shows that selfish routing causes congestion by not utilizing the third route, whereas RL can avoid congestion and handle accidents. Fig.~\ref{fig:experiment_fig3} shows the number of cars in each cell as well as the queue lengths over time. The small oscillations, which occur even after the effect of the accidents disappear (between third and fourth hours), are due to noisy demand and the discretization of cells. With selfish routing, the vehicles use the longest path only when there is an accident in another path (around first and third hours) or the other two paths are congested (third and fifth hours). In contrast, RL makes good use of the network and leads to altruistic behavior. It also handles the accidents by effectively altering the routing of autonomous cars (around third hour, autonomous cars start using the first route until the accident in the third route is cleared). Hence, it manages to stabilize the queue and prevent congestion. We provide video visualizations of this run at \url{https://youtu.be/XwdSJuUb09o}.

\begin{figure*}[h]
	\centering
	\includegraphics[width=\linewidth]{./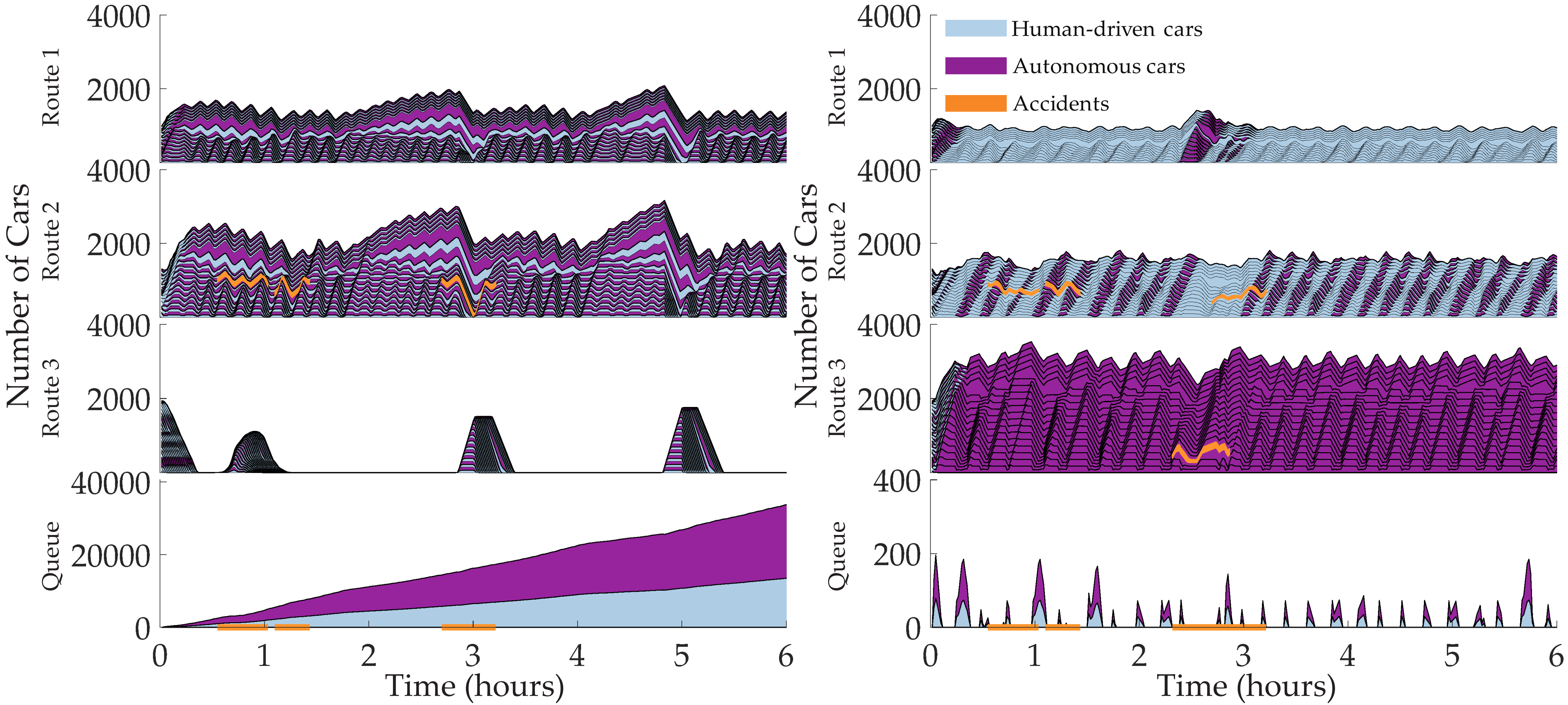}
	\caption{The network under perturbations due to accidents and noisy demand. For each path and time step, from bottom to top, the stacked color segments show the number of cars in the cells from origin to the destination. Congestion occurs only upstream to the bottlenecks. \textbf{(a)} Selfish routing. \textbf{(b)} RL routing.}
	\label{fig:experiment_fig3}
\end{figure*}

\section{Conclusion}
\label{sec:conclusion}
\textbf{Summary.} We presented a framework for understanding a dynamic traffic network shared between selfish human drivers and controllable autonomous cars. We show, using deep RL, we can find a policy to minimize the average travel time experienced by users of the network. We develop theoretical results to describe and calculate the best equilibria that can exist and empirically show that our policy reaches the best possible equilibrium performance in parallel networks. Further, we provide case studies showing how the training period scales with the number of paths, and we show our control policy is empirically robust to accidents and stochastic demand.

\textbf{Limitations.} We used the number of cars in each cell as predictive features for RL training. Although this makes the state space dimensionality grow only linearly with the number of cells, it may not be scalable to much larger traffic networks. Moreover, the action space grows linearly with the number of source-destination pairs, also impacting the scalability of the algorithm.

\textbf{Future work.} This work opens up many future directions for research, including using multi-agent reinforcement learning to model autonomous vehicles with competitive goals and/or en route decision making ability, and improving how the training time scales with the complexity of the network. Another interesting future work is to investigate how an RL policy can be deployed and the simulation imperfections (including the dependency on the simulated human choice dynamics) can be alleviated by collecting online data using sensors from the real traffic network.

\section*{Acknowledgments}
This work was supported by NSF grant \#1953032 and Toyota. Toyota Research Institute (TRI) provided funds to assist the authors with their research but this article solely reflects the opinions and conclusions of its authors and not TRI or any other Toyota entity.

\renewcommand*{\bibfont}{\small}
\printbibliography

\section{Appendix}
\label{sec:appendix}
\balance
\subsection{Summary of notation}\label{sct:notation}
See Table~\ref{tab:notation}.
\begin{table}[h]
	\setlength{\tabcolsep}{3pt}
	\caption{Summary of Notation}\label{tab:notation}
	\centering
	\begin{tabular}{ |l l l|}
		\hline
		$\pathidx $ & Path index & unitless \\ 
		$\pathset $ & Set of paths in the network & set of paths\\
		$ \cellidx $ & Cell index & unitless \\
		$\cellset$ & set of cells in the network & set of cells \\
		$\cellset_\pathidx$ & set of cells in path $\pathidx$ & set of cells \\
		$\upstreamcells_\cellidx$ & set of cells upstream of cell $\cellidx$ & set of cells \\
		$\ffvelocity_{\cellidx} $ & Free-flow velocity of cell $\cellidx$ & cells/time step \\
		$\numlanes_{\cellidx} $ & Number of lanes of cell $\cellidx$ & unitless \\
		$\spacehuman_{\cellidx} \; (\spaceaut_{\cellidx})$ & Nominal vehicle headway on cell $\!\cellidx$ & cells/vehicle \\
		$\denshum_{\cellidx} \; (\densaut_{\cellidx}) $ & Density of vehicles on cell $\cellidx$ & vehicles/cell \\
		$\dens_{\cellidx} $ & Total vehicle density on cell $\cellidx$ & vehicles/cell \\
		$\flowhum_{\cellidx} \; (\flowaut_{\cellidx}) $ & Flow of vehicles from cell $\cellidx$ & vehicles/time step \\
		$\flowinhum_\cellidx \; (\flowinaut_\cellidx)$ & Hum. (aut.) veh flow into cell $\cellidx$ & vehicles/time step   \\
		$\autlev_{\cellidx} $ & Autonomy level of cell $\cellidx$  & unitless \\
		$\critdens_{\cellidx}(\autlev) $ & Critical density of cell $\cellidx$, at aut. $\autlev$ & vehicles/cell \\
		$\jamden_{\cellidx}$ & Jam (maximum) density of cell $\cellidx$ & vehicles/cell \\
		$ \capacity_{\cellidx}(\autlev) $ & Capacity of cell $\cellidx$, at aut. $\autlev$ &  vehicles/time step \\
		$\shockspd_{\cellidx}(\autlev) $ & Shockwave speed of cell $\cellidx$, at aut. $\autlev$  & cells/time step \\
		$ \timeidx$ & Time index & unitless \\
		$ \latency_\pathidx(\timeidx) $ & Latency of path $\pathidx$ if starting at time $\timeidx$ & time steps \\
		$\priority_{\cellidx}(\timeidx)$ & Priority for cell $\cellidx$ at a merge at time $\timeidx$ & unitless \\
		$ \routinghum_\cellidx(\pathidx, \timeidx) \; (\routingaut)$ & Frac. of hum. (aut.) vehs in $\cellidx$ on $\pathidx$ at $\timeidx$ & unitless \\
		$ \fracouthum_\cellidx(\cellidx',\timeidx) \; (\fracoutaut) $ & Frac. of hum. (aut.) vehs $\cellidx \rightarrow \cellidx'$ at $\timeidx$ & unitless \\
		$J(\timeidx)$ & Stage cost at time $\timeidx$ & vehicles \\
		$ \numbcells \; (\numnbcells) $ & \# of (non)bottleneck cells on path $\pathidx$ & cells \\ 
		$\numlanesb \; (\numlanesnb) $ & \# of lanes in (non)bottleneck cells on $\pathidx$ & unitless \\
		$\laneratio_\pathidx $ & $:= \numlanesb/\numlanesnb $ & unitless \\
		$\gamma_\pathidx $ & Number of congested cells on path $\pathidx$ & cells \\
		\hline
	\end{tabular}
\end{table}

\subsection{Proofs for Section \ref{sct:network_eq}}\label{sct:network_proofs}

\textbf{Proof of Lemma \ref{lma:homogeneousautlevel}.}
	By definition, at equilibrium, the number of vehicles in each cell $\cellidx$ in $\cellset_\pathidx$, $\densaut_{\cellidx}(\timeidx)$ and $\densaut_{\cellidx}(\timeidx)$ is constant for all times $\timeidx$. Since by definition the incoming flow is also constant, by the definition of the sending and receiving functions, constant cell densities implies constant flows. By \eqref{eq:dens_update}, a constant density also implies that the incoming and outgoing flow in each cell are equal. This means that all cells will have the same incoming flow as the first cell. Further, we know that since the density of autonomous vehicles is constant over time, incoming and outgoing autonomy levels are equal. Accordingly, if cell $\cellidx'$ is the cell immediately upstream of $\cellidx$, then $\autlev_{\cellidx'}(\timeidx) \flow_{\cellidx'}(\timeidx) = \autlev_{\cellidx}(\timeidx) \flow_{\cellidx}(\timeidx)$. Since we also have $\flow_{\cellidx'}(\timeidx)=\flow_{\cellidx}(\timeidx)$, this implies that $\autlev_{\cellidx'}(\timeidx)=\autlev_{\cellidx}(\timeidx)$. Therefore the autonomy level of all cells is the same. Let us denote this uniform autonomy level $\autlev_{\pathidx}$. Let the index of the first cell in the path be 0. Then, $\demandhum_{\pathidx} + \demandaut_{\pathidx} = \flow_{0}$ and $\demandaut_{\pathidx} = \autlev_{\pathidx} \flow_{0}$. Combining these two expressions, we find $\autlev_{\pathidx} = \demandaut_{\pathidx}/(\demandhum_{\pathidx} + \demandaut_{\pathidx})$. \qed
	
\textbf{Proof of Lemma \ref{lma:ff}.}
	Under Assumption~\ref{assump:ff_latency}, no two paths have the same free-flow latency. With Proposition~\ref{prop:equilibrium}, this implies that if an equilibrium has a used path with no congestion, it must be the used path with greatest free-flow latency, as otherwise all used paths would not have the same latency. Therefore, if an equilibrium routing with positive flow on paths ${[}\pathidx{]}$ has a path in free-flow, it must be path $\pathidx$. Otherwise, we can construct an equilibrium with the same demand that has path $\pathidx$ in free-flow. Recall that the latency on paths in equilibrium is increasing with the length of the congested portion of the path, $\gamma_{\pathidx'}$, and $\gamma_{\pathidx'}=0$ corresponds to an uncongested path. If all paths are congested, we consider decreasing the length of congestion on all paths simulatenously, at rates which keep the path latencies equal. This continues until path $\pathidx$ becomes completely uncongested. This construction proves the lemma. \qed

\subsection{Overview of Proximal Policy Optimization (PPO)}\label{sct:ppo_overview}
In this section, we give a brief overview of the PPO method \cite{schulman2017proximal} we used for training our deep reinforcement learning model. We first start with formalizing the problem. We then introduce the policy gradients and the details of PPO. To keep the notation consistent with the reinforcement learning literature, we abuse the notation for some variables. Hence, this section of the appendix is written in a standalone way, and the variables should not be confused with the notation introduced in the main paper (e.g. $f$ is going to denote the transition distribution of the system as introduced below, instead of flow values as in the main paper).

\textbf{Problem Setting.} We consider a sequential decision making problem in a Markov decision process (MDP) represented by a tuple $(\mathcal{S}, \mathcal{A}, f, T, r, \gamma)$, where $\mathcal{S}$ is the set of states. $\mathcal{A}$ denotes the set of actions, and the system transitions with respect to the transition distribution $f: \mathcal{S}\times\mathcal{A}\times\mathcal{S}\to[0,1]$. For example, if $f(s, a, s')=p$, this means taking action $a\in\mathcal{A}$ at state $s\in\mathcal{S}$ transitions the system into state $s'$ with probability $p$. Next, $T$ denotes the horizon of the system, i.e., the process gets completed after $T$ time steps. The reward function $r:\mathcal{S}\times\mathcal{A}\to\mathbb{R}$ maps state-actions to reward values. The decision maker is then trying to maximize the cumulative reward over $T$ time steps by only observing the observations (not states). Finally $\gamma$ is a discount factor that sets how much priority we give to optimizing earlier rewards in the system.

Let us now describe how we formulate a transportation network with the CTM model as an MDP in this paper. The state of the network is fully defined by the following information:
\begin{itemize}
	\item Location of each vehicle (which cell or queue it is in),
	\item Type of each vehicle (human-driven or autonomous),
	\item Accident information (where and when it happened), and
	\item Planned path of each vehicle (which cells it is going to traverse).
\end{itemize}
In our model, we assumed the first three items in the above list are available as observations. While this breaks the Markov assumption, deep RL techniques often perform well in partially observable MDPs, too. So our deep RL policy is trying to make its decisions based only on those first three observations, and the non-observability of the planned paths increases the stochasticity of the problem. The action set of the decision maker is defined by the set of available routing paths of autonomous vehicles. The transition distribution follows the dynamics of CTM, human choice dynamics, as well as the accidents which also introduce stochasticity into the system. Finally, as a reward function, one can think of using the negative of the number of cars in the system as a proxy to negative of overall latency in the network.

\textbf{Policy Gradients.} To solve this problem using deep neural networks, we model the decision-maker agent with a stochastic policy $\pi_\theta$ parameterized with $\theta$ (e.g. weights of the neural network), such that $\pi_\theta(a \mid s)$ gives the probability of taking action $a$ when observing state $s$. The goal of the agent is to maximize the expected cumulative discounted reward:
\begin{align*}
J(\theta) = \mathbb{E}_{\tau\sim{\pi_\theta}}\left[\sum_{t=0}^{T-1}\gamma^t r(s_t,a_t)\right]
\end{align*}
where $\tau$ denotes a trajectory $(s_0,a_0,\dots,s_{T-1},a_{T-1},s_T)$ in the system. The discount factor is to improve robustness and to reduce susceptibility against high variance. We can equivalently write this objective as:
\begin{align*}
J(\theta) = \int_{\Xi} \pi_\theta(\tau) r(\tau)d\tau
\end{align*}
where $\Xi$ is the set of all possible trajectories, $\pi_\theta(\tau)$ is the probability of trajectory $\tau$ under policy $\pi_\theta$, and $r(\tau)$ is the cumulative discounted reward of trajectory $\tau$. The idea in policy gradients is to take gradient steps to maximize this quantity by optimizing $\theta$:
\begin{align*}
\nabla_{\theta} J(\theta) &= \nabla_{\theta} \int_{\Xi} \pi_\theta(\tau) r(\tau)d\tau\\
&= \int_{\Xi} \nabla_{\theta} \pi_\theta(\tau) \frac{\pi_\theta(\tau)}{\pi_\theta(\tau)} r(\tau)d\tau\\
&= \int_{\Xi} \pi_\theta(\tau) r(\tau) \nabla_{\theta} \log\pi_\theta(\tau)d\tau\\
&= \mathbb{E}_{\tau\sim\pi_\theta}\left[r(\tau) \nabla_{\theta} \log\pi_\theta(\tau)\right]
\end{align*}
which we can efficiently approximate by sampling trajectories using the policy.

Unfortunately, this vanilla policy gradient method is not robust against variance (due to stochasticity in the environment and trajectory sampling) and suffers from data-inefficiency. In recent years, several works have developed alternative ways to approximate the gradients. One such idea is based on using baselines to reduce variance:
\begin{align*}
\nabla_{\theta} J(\theta) = \mathbb{E}_{\tau\sim\pi_\theta}\left[\sum_{t=0}^{T-1} \nabla_{\theta} \log\pi_{\theta}(a_t^\tau \mid s_t^\tau)\hat{A}_t^\tau\right]
\end{align*}
where $\hat{A}$ is called the estimated advantage function, which is usually defined as $G_t^\tau-V(s_t^\tau)$, where $G_t^\tau$ is the cumulative discounted reward of the trajectory $\tau$ after (and including) time step $t$, and $V(s_t^\tau)$ is some baseline that quantifies the value of state $s_t^\tau$. This new equation for $\nabla_{\theta} J(\theta)$ holds due to the Markov assumption and that the baseline is independent from the policy parameter $\theta$.

Having presented the policy gradients and the use of baselines for variance reduction, we are now ready to give an overview of PPO.

\textbf{Proximal Policy Optimization (PPO).} PPO further improves the robustness and data-efficiency of policy gradient methods by using a surrogate objective that prevents the policy from being updated with large deviations. Instead of the usual objective $\mathbb{E}_{\tau\sim\pi_\theta}\left[\log\pi_\theta(a_t^\tau \mid s_t^\tau) \hat{A}_t^\tau \right]$, PPO uses the following objective:
\begin{align*}
J_1(\theta)=\mathbb{E}_{\tau\sim\pi_\theta}\left[\min(g^\tau_t(\theta)\hat{A}_t^\tau,\textrm{clip}(g^\tau_t(\theta),1-\epsilon,1+\epsilon)\hat{A}_t^\tau)\right]
\end{align*}
where 
\begin{align*}
g^\tau_t(\theta) = \!\frac{\pi_\theta(a^\tau_t \mid s^\tau_t)}{\pi_{\theta_\textrm{old}}(a^\tau_t \mid s^\tau_t)}\:\textrm{and}\:\textrm{clip}(x,\epsilon_1,\epsilon_2)\!=\!\begin{cases}
\epsilon_1 & x<\epsilon_1,\\
x & \epsilon_1\leq x \leq \epsilon_2,\\
\epsilon_2 & \textrm{otherwise}.
\end{cases}
\end{align*}

In addition to $J_1(\theta)$, PPO uses two more objective functions and converts the problem into a multi-objective optimization problem. The first additional objective is for the baseline $V(s_t^\tau)$. Specifically, PPO learns a parameterized value function $V_\phi$ in a supervised way to minimize $(V_{\phi}(s_t^\tau)-V_t^\textrm{target})^2$ where $V_t^\textrm{target}$ is calculated using the sampled trajectories as a sum of discounted rewards after (and including) time step $t$. It should be noted that this does not make $G_t^\tau-V_\phi(s_t^\tau)=0$, because $V_\phi(s_t^\tau)$ is an estimate of the true value function and is updated after the computation of the estimated advantage. Therefore,
\begin{align*}
J_2(\phi)=-\mathbb{E}_{\tau\sim\pi_\theta}\left[V_{\phi}(s_t^\tau)-V_t^\textrm{target}\right]\:.
\end{align*}

Finally, PPO uses an entropy bonus (inspired by \cite{mnih2016asynchronous}) to ensure sufficient exploration:
\begin{align*}
J_3(\theta)=\mathbb{E}_{\tau\sim\pi_\theta}H(\pi_\theta(\cdot \mid s_t^\tau))\:,
\end{align*}
where $H$ is information entropy. At the end, PPO tries to solve:
\begin{align*}
\textrm{maximize}_{\theta,\phi} \quad J_1(\theta) + J_2(\phi) + c J_3(\theta)
\end{align*}
where $c$ is the coefficient for the entropy term.

\subsection{Experiment details}
\label{sct:ppo_params}
In implementation, we used $J(\timeidx)-J(\timeidx-1)$ as a proxy cost for time step $\timeidx$, where $J(0)=0$.

Below are the set of hyperparameters we used for PPO. We refer to Section~\ref{sct:ppo_overview} and \cite{schulman2017proximal} for the definitions of PPO-specific parameters. While this set yields good results as we presented in the paper, a careful tuning may improve the performance.
\begin{itemize}[nosep]
	\item Number of Time Steps: $40$ million
	\item Number of Actors: $32$ ($32$ CPUs in parallel)
	\item Time Steps per Episode During Training: $300$
	\item Time Steps per Actor Batch: $1200$
	\item $\epsilon$ for Clipping in the Surrogate Objective: $0.2$
 	\item Optimization Step Size (OSS): $3\times 10^{-4}$
	\item Annealing for $\epsilon$ (Clipping) and OSS: Linear (down to $0$)
	\item Entropy Coefficient: $0.005$
	\item Number of Optimization Epochs: $5$
	\item Optimization Batch Size: $64$
	\item $\gamma$ for Advantage Estimation: $0.99$
	\item $\lambda$ for Advantage Estimation: $0.95$
	\item $\epsilon$ for Adam Optimization: $10^{-5}$
\end{itemize}

Finally, we report the training times (for $40$ million time steps) and the number of time steps of empirical convergence (in terms of reward value) for each RL policy in Table~\ref{tab:training_times}. In test time, RL policies produce an action in under $1$ ms.

\begin{table}[h]
	\setlength{\tabcolsep}{3pt}
	\caption{Training and Convergence Times}\label{tab:training_times}
	\centering
	\begin{tabular}{ | l | c | c |}
		\hline
		\textbf{Policy} & \textbf{Training Time} & \textbf{Time Step of Convergence} \\
		\hline
 		Simple General Network & $10.0$ hours & $26.3$ million \\ 
 		OW Network & $253.1$ hours & $31.0$ million \\ 
		$\lvert\pathset\rvert=2$ & $22.2$ hours & $0.7$ million \\
		$\lvert\pathset\rvert=3$ & $38.9$ hours & $10.0$ million \\
		$\lvert\pathset\rvert=3$, w/ accidents & $40.5$ hours & $22.0$ million \\
		$\lvert\pathset\rvert=3$, $\bar\autlev = 0.4$ & $50.6$ hours & $25.5$ million \\
		$\lvert\pathset\rvert=3$, $\bar\autlev = 0.5$ & $43.1$ hours & $19.3$ million \\
		$\lvert\pathset\rvert=3$, $\bar\autlev = 0.7$ & $38.6$ hours & $6.6$ million \\
		$\lvert\pathset\rvert=4$ & $101.4$ hours & $23.3$ million \\
		\hline
	\end{tabular}
\end{table}




\end{document}